\journal{System \& Control Letters}
\DeclareMathOperator{\rank}{rank}
\DeclareMathOperator{\im}{im}
\newtheorem{theorem}{Theorem}
\newtheorem{example}{Example}
\newtheorem{lemma}{Lemma}
\newdefinition{remarks}{Remark}
\newdefinition{proposition}{Proposition}
\newdefinition{definition}{Definition}
\newproof{proof}{Proof}
\let\oldmarginpar\marginpar
\renewcommand\marginpar[1]{\-\oldmarginpar[\raggedleft\footnotesize #1]%
{\raggedright\footnotesize #1}}
\newcommand{\R}{{\mathbb{R}}}
\newcommand{\N}{{\mathbb{N}}}
\newcommand{\sB}{\mathscr{B}}
\newcommand{\setdef}[2]{\left\{\, #1 \left|\, \vphantom{#1} #2\right.\right\}}
\begin{document}
	
	\begin{frontmatter}
		\title{Partial impulse observability of linear descriptor systems }
		
		% Group authors per affiliation:
		\author[a]{Juhi Jaiswal}
		\ead{juhi_1821ma03@iitp.ac.in}
		
		\author[b]{Thomas Berger}\ead{thomas.berger@math.upb.de}

		\author[a]{Nutan Kumar Tomar\corref{cor1}}
		\ead{nktomar@iitp.ac.in}
		
		\address[a]{Department of Mathematics, Indian Institute of Technology Patna, India}
		
		\address[b]{Universit\"at Paderborn, Institut f\"ur Mathematik, Warburger Str.~100, 33098~Paderborn, Germany}
		
		\cortext[cor1]{Corresponding author}
		
\begin{abstract}	
A research paper in this journal vol.~61, no.~3, pp.~427--434, 2012, by M. Darouach, provides a functional observer design for linear descriptor systems under the partial impulse observability condition. The observer design is correct, but there was a flaw in the algebraic criterion characterizing partial impulse observability. In the present paper, we derive a novel characterization of partial impulse observability in terms of a simple rank condition involving the system coefficient matrices and an alternative characterization in terms of the Wong sequences.
\end{abstract}
		
\begin{keyword}
Linear descriptor systems, Differential algebraic equations, Partial impulse observability, Wong sequences
\end{keyword}
		
	\end{frontmatter}
	
\section{Introduction}\label{intro}\noindent 	We consider linear time-invariant multivariable descriptor systems of the form
\begin{subequations}\label{dls1}
	\begin{eqnarray}
		E\dot{x}(t) &=& Ax(t), \label{dls1a} \\
		y(t) &=& Cx(t),  \label{dls1b} \\
		z(t) &=& Lx(t), \label{dls1c}
	\end{eqnarray}
\end{subequations}
where $E,A \in \mathbb{R}^{m \times n},~C \in \mathbb{R}^{p \times n}$, and $L \in \mathbb{R}^{r \times n}$ are known constant matrices. We call $x(t) \in \mathbb{R}^n$ the (unknown) semistate vector, $y(t) \in \mathbb{R}^p$ the measured output vector, and $z(t) \in \mathbb{R}^r$ the unknown output vector. The unknown output $z$ contains those variables which cannot be measured and, therefore, observers are required to estimate them.
	
In \cite{darouach2012functional}, Darouach derived an algebraic test for partial impulse observability of \eqref{dls1} with respect to $L$ and used this concept in designing a functional observer to estimate $z$. Under the same algebraic assumption, the observer designing approach of \cite{darouach2012functional} has been improved by using a linear matrix inequality (LMI) formulation in \cite{darouach2017functional}. The concept of partial impulse observability was first introduced in \cite{darouach2012functional} as follows.
	
\begin{definition}\cite[Def.~1]{darouach2012functional}\label{def:Kimpulse}
The descriptor system \eqref{dls1}, or the triplet $(E,A,C)$, is said to be partially impulse observable with respect to $L$, if $y(t)$ is impulse free for $t \geq 0$ only if $Lx(t)$ is impulse free for $t \geq 0$.
\end{definition}
	
In the papers \cite{darouach2012functional} and \cite{darouach2017functional}, the following characterization of partial impulse observability of \eqref{dls1} was provided.
	
\begin{theorem}(\cite[$(1)$~\&~$(5)$ of Lem.~$5$]{darouach2012functional} and \cite[$(1)$~\&~$(2)$ of Lem.~$1$]{darouach2017functional})\label{Darouachtheorem}
The triplet $(E,A,C)$ is partially impulse observable with respect to $L$ if, and only if,
\begin{equation}\label{Kimpulse}
\rank \begin{bmatrix} E & A \\ 0 & E \\ 0 & C \\ 0 & L \end{bmatrix} =
\rank \begin{bmatrix} E & A \\ 0 & E \\ 0 & C \end{bmatrix}.
\end{equation}
\end{theorem}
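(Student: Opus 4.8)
The plan is to recast the rank identity \eqref{Kimpulse} as a subspace inclusion and then match that inclusion against the impulsive part of the distributional solutions of \eqref{dls1a}. First I would use the fact that appending rows leaves the rank unchanged exactly when the appended rows lie in the row space of the original matrix; dually, $\rank\begin{bmatrix} E & A\\ 0 & E\\ 0 & C\\ 0 & L\end{bmatrix}=\rank\begin{bmatrix} E & A\\ 0 & E\\ 0 & C\end{bmatrix}$ holds if and only if $\ker\begin{bmatrix} E & A\\ 0 & E\\ 0 & C\end{bmatrix}\subseteq\ker\begin{bmatrix} 0 & L\end{bmatrix}$. A vector $\begin{bmatrix} u\\ v\end{bmatrix}$ lies in the null space on the left precisely when $Eu+Av=0$, $Ev=0$ and $Cv=0$; since $u$ enters only through $Eu=-Av$, it can be eliminated and the condition on $v$ becomes $Av\in\im E$, $Ev=0$, $Cv=0$. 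Writing $\mathcal{V}_1:=\setdef{v\in\R^n}{Av\in\im E}$ for the first Wong subspace, I would thus reduce \eqref{Kimpulse} to the purely geometric statement
\[
\mathcal{S}:=\mathcal{V}_1\cap\ker E\cap\ker C\subseteq\ker L.
\]
This step is routine linear algebra and pins down the object to be compared with the dynamics.

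Next I would make the notion of impulse explicit. Working in a space of piecewise-smooth distributions, I would write a solution of \eqref{dls1a} as $x=\tilde x\,1_{[0,\infty)}+\sum_{k=0}^{K}a_k\delta^{(k)}$ with $\tilde x$ smooth on $[0,\infty)$ and $a_k\in\R^n$, so that $y=Cx$ is impulse free iff $Ca_k=0$ for all $k$ and $z=Lx$ is impulse free iff $La_k=0$ for all $k$. Substituting into $E\dot x=Ax$ and matching the coefficients of $\delta^{(0)},\dots,\delta^{(K+1)}$ yields the chain relations
\[
Aa_0=E\tilde x(0),\qquad Aa_j=Ea_{j-1}\ (1\le j\le K),\qquad Ea_K=0,
\]
together with the requirement that $\tilde x(0)$ be a \emph{consistent} initial value, i.e.\ $\tilde x(0)\in\mathcal{V}^*$, where $\mathcal{V}^*$ is the limit of the Wong sequence $\mathcal{V}_0=\R^n$, $\mathcal{V}_{i+1}=A^{-1}(E\mathcal{V}_i)$. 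Definition~\ref{def:Kimpulse} then reads: the triplet is partially impulse observable with respect to $L$ if and only if every admissible chain $(a_0,\dots,a_K)$ with $Ca_k=0$ for all $k$ also satisfies $La_k=0$ for all $k$.

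It remains to prove that $\mathcal{S}\subseteq\ker L$ is equivalent to this impulse condition. For necessity I would argue by contraposition: given $v\in\mathcal{S}\setminus\ker L$, the relations $Av\in\im E$ and $Ev=0$ suggest setting $a_1:=v$ and choosing $a_0=u$ with $Eu=Av$, so that $Aa_1=Ea_0$ and $Ea_1=0$ hold automatically; one then hopes to pick the smooth part and $u$ so that $Aa_0=E\tilde x(0)$ for some consistent $\tilde x(0)$ and $Cu=0$, producing an output-impulse-free solution with $Lv\neq0$ and hence a failure of observability. For sufficiency I would take any admissible $C$-impulse-free chain and show all $La_k$ vanish, working downward from the top: since $a_K\in\ker E$ and $Aa_K=Ea_{K-1}\in\im E$, one has $a_K\in\mathcal{S}$, whence $La_K=0$, after which I would try to discard the leading term and induct on $K$.

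The step I expect to be the main obstacle is closing both arguments against the consistency constraint $\tilde x(0)\in\mathcal{V}^*$. In the necessity construction the chosen $u$ only yields $Au\in\im E$, whereas a genuine solution needs $Au\in E\mathcal{V}^*$ and also $Cu=0$, and $\mathcal{V}^*$ may be strictly smaller than $\mathcal{V}_1$; in the sufficiency induction, truncating $\sum_{k=0}^{K}a_k\delta^{(k)}$ to its lower terms destroys the relation $Ea_{K-1}=Aa_K$ unless the top coefficient can be absorbed, so the recursion does not visibly close, and a lower-order coefficient $a_j$ with $La_j\neq0$ need not lie in $\ker E$ and hence need not belong to $\mathcal{S}$. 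Reconciling the first Wong subspace $\mathcal{V}_1$ appearing in \eqref{Kimpulse} with the limit $\mathcal{V}^*$ forced by the dynamics is therefore the crux, and a careful comparison of these two subspaces is what I expect to ultimately decide the equivalence.
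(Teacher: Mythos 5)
You cannot complete this proof, because the statement is false --- and that is the entire point of the paper you were given: the paper does not prove Theorem~\ref{Darouachtheorem}, it \emph{refutes} it (Section~\ref{count}) and replaces it with Theorem~\ref{thm2}. Your opening reduction is correct: \eqref{Kimpulse} is equivalent, via Lemma~\ref{lm:ker} and elimination of $u$ from $Eu+Av=0$, to the inclusion $\mathcal{S} := A^{-1}(\im E)\cap\ker E\cap\ker C\subseteq\ker L$. But this inclusion only constrains vectors lying in $\ker E$, i.e.\ (in your chain notation) the \emph{top} coefficient $a_K$ of a Dirac chain. The obstruction you flagged at the end --- ``a lower-order coefficient $a_j$ with $La_j\neq 0$ need not lie in $\ker E$ and hence need not belong to $\mathcal{S}$'' --- is not a technical wrinkle that a more careful comparison of $\mathcal{V}_1$ with $\mathcal{V}^*$ will resolve; it is precisely the failure mode, and the sufficiency direction (rank condition $\Rightarrow$ observability) is irreparably wrong.

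The paper's counterexample makes this concrete. Take
\[
E = \begin{bmatrix} 0 & 1 & 0 \\ 0 & 0 & 1 \\ 0 & 0 & 0 \end{bmatrix},\quad
A = I_3,\quad
C = \begin{bmatrix} 0 & 0 & 1 \end{bmatrix},\quad
L = \begin{bmatrix} 0 & 1 & 0 \end{bmatrix}.
\]
This is a single $\sigma$-block ($J_\sigma = E$), so every solution on $[0,\infty)$ is the pure chain $x|_{[0,\infty)} = -\delta\, Ex(0-) - \dot\delta\, E^2x(0-)$. Since $CE = CE^2 = 0$, the output $y$ is impulse free for \emph{every} solution, yet $z[0] = -\delta\,\begin{bmatrix} 0 & 0 & 1\end{bmatrix}x(0-)\neq 0$ for suitable $x(0-)$, so the system is not partially impulse observable. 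Nevertheless \eqref{Kimpulse} holds (both ranks equal $4$); in your language, $\mathcal{S} = \mathrm{span}\{e_1\}\subseteq\ker L$, while the offending coefficient $a_0 = -Ex(0-)$ satisfies $La_0\neq 0$ but $Ea_0\neq 0$, so it escapes $\mathcal{S}$ exactly as you feared. The correct characterization (the paper's Theorem~\ref{thm2}) must control chains of \emph{all} lengths up to $n+1$: it requires $\rank\mathcal{F}_{n+1} = \rank\mathcal{F}_{n+1,L}$ for the block-Toeplitz matrices built from $\bar E = \begin{bmatrix} E \\ 0\end{bmatrix}$, $\bar A = \begin{bmatrix} A \\ C\end{bmatrix}$, or equivalently the Wong-sequence condition $\mathcal{W}^*_{[\bar E,\bar A]}\cap\bar A^{-1}(\im\bar E)\subseteq\ker L$. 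Condition \eqref{Kimpulse} is just the $l=2$ member of that family ($\rank\mathcal{F}_2 = \rank\mathcal{F}_{2,L}$), which is why it is necessary but far from sufficient; your ``necessity by contraposition'' half is the only part of the plan that can be salvaged.
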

	
Roughly speaking, partial observability of \eqref{dls1} is related to the reconstruction of $z(t)$ from the knowledge of $y(t)$. Since, corresponding to an inconsistent initial value, $z$ may exhibit impulses, the concept of partial impulse observability is an important aspect for any descriptor system. Thus it is evident that to study  partial impulse observability, we have to consider a proper framework of distributional solutions of~\eqref{dls1}. Here, we consider the class of piecewise-smooth distributions $\mathscr{D}'_{pw\mathscr{C}^{\infty}}$ as introduced in \cite{trenn2009distributional}; for a thorough discussion of this class of distributions, we also refer to \cite{trenn2013solution}. Motivated by~\cite{berger2017observability}, we denote the set of all distributional solutions of~\eqref{dls1} on $[0,\infty)$ by
\[
    \sB := \setdef{(x,y,z) \in (\mathscr{D}'_{pw\mathscr{C}^{\infty}})^{n+p+r} }{ (x,y,z) \text{ satisfies \eqref{dls1} on } [0,\infty) }.
\]
$\sB$ is called ITP-behavior in \cite{berger2017observability}. We stress that it is important that the system \eqref{dls1} is only supposed to hold on $[0,\infty)$, and the solution is free on $(-\infty,0)$, which is different from considering solutions of \eqref{dls1} on $\R$ restricted to  $[0,\infty)$, because of possible impulsive terms due to inconsistent initial values. Furthermore, it is important to note that the distributional restriction to any interval $M \subseteq \mathbb{R}$ is well defined for $\mathcal{D} \in \mathscr{D}'_{pw\mathscr{C}^{\infty}}$, see \cite{trenn2013solution}. Moreover, any $\mathcal{D} \in \mathscr{D}'_{pw\mathscr{C}^{\infty}}$ can be uniquely represented as a combination of a distribution induced by a locally integrable piecewise-smooth function $f$, Dirac delta distributions $\delta_{t_j}$ and their distributional derivatives $\delta_{t_j}^{(i)}$, see~\cite{trenn2009distributional}. The part of $\mathcal{D} \in \mathscr{D}'_{pw\mathscr{C}^{\infty}}$ corresponding to $\delta_{t_j}$ and its derivatives is called the \textit{impulsive part} and denoted by $D[t_j]$, see also the definition in \cite[Eq.~(2)]{berger2017observability}. Since the class $\mathscr{D}'_{pw\mathscr{C}^{\infty}}$ also allows to perform point evaluation of any element, throughout the article, $x[t],~y[t]$, and $z[t]$ stand for the impulsive part of the respective variables at time $t$.

We exploit the behavior $\sB$ to reformulate the definition of partial impulse observability as follows.

\begin{definition}
The descriptor system \eqref{dls1} or the triplet $(E,A,C)$ is partially impulse observable with respect to $L$, if
\[
    \forall\, (x,y,z)\in \mathscr{B}:\ \big(\forall\, t\ge 0:\ y[t]=0\big)\ \implies\ \big(\forall\, t\ge 0:\ z[t]=0\big).
\]
\end{definition}
	
The paper is organized as follows. Section \ref{prelim} collects some preliminary results used in the remainder of the article. In Section \ref{count}, we show that the test condition \eqref{Kimpulse} can give erroneous results, and hence Theorem \ref{Darouachtheorem} is wrong in general. Section \ref{mainresult} contains the main contribution of the paper, where we provide a modified algebraic test to check the partial impulse observability of \eqref{dls1}. Section \ref{numerical} contains a few examples to illustrate the proposed theory. Finally, Section \ref{conc} concludes the paper.
	
We use the following notation: $0$ and $I$ stand for zero and identity matrices of appropriate dimension, respectively. Sometimes, for more clarity, the identity matrix of size $n \times n$ is denoted by $I_n$. In a block partitioned matrix, all missing blocks are zero matrices of appropriate dimensions. The symbols $\im A$ and $\ker A$ denote the image and kernel, respectively, of any matrix $A \in \mathbb{R}^{m \times n}$. The set $AM := \setdef{Ax}{x \in M}$ is the image of a subspace $M \subseteq \R^n$ under $A \in \mathbb{R}^{m \times n}$ and $A^{-1}M := \setdef{x \in \mathbb{R}^n}{Ax \in M}$ represents the pre-image of $M \subseteq \R^m$ under $A \in \mathbb{R}^{m \times n}$.

\section{Preliminaries}\label{prelim}

First we  collect some standard results for the characterization of solutions to the following homogeneous system:
\begin{equation}\label{pre:sys}
\mathscr{E}\dot{x} = \mathscr{A}x,
\end{equation}
where $\mathscr{E},\mathscr{A} \in \mathbb{R}^{m \times n}$. By a solution we mean a piecewise-smooth distribution $x\in(\mathscr{D}'_{pw\mathscr{C}^{\infty}})^{n}$ which satisfies \eqref{pre:sys} on $[0,\infty)$. The first order matrix polynomial $(\lambda \mathscr{E} - \mathscr{A})$, in the indeterminate $\lambda$, is called \textit{matrix pencil} for~\eqref{pre:sys}. Any matrix pencil $(\lambda \mathscr{E}-\mathscr{A})$ is called \textit{regular}, if $m=n$ and $\det(\lambda \mathscr{E}-\mathscr{A}) \neq 0$. If a matrix pencil is not regular, it is called \textit{singular}. For any singular matrix pencil, the Kronecker canonical form (KCF) is the simplest decomposition which provides many useful theoretical tools for analyzing \eqref{pre:sys}.	

\begin{lemma}\cite{gantmacher1959theory}
	{\em The Kronecker Canonical Form (KCF):} For every matrix pencil $(\lambda \mathscr{E} - \mathscr{A})$ there exist nonsingular matrices $P \in \mathbb{C}^{m \times m}$ and $Q \in \mathbb{C}^{n \times n}$ such that, for multi-indices $\epsilon,~f,~\sigma$, and $\eta$,
	\begin{eqnarray}\label{kron}
	P(\lambda \mathscr{E} - \mathscr{A})Q =  \begin{bmatrix}
	\lambda E_{\epsilon} - A_{\epsilon} & & & \\ & \lambda I_f - J_f & & \\ & & \lambda J_{\sigma} - I_{\sigma} & \\ & & & \lambda E_{\eta} - A_{\eta}
	\end{bmatrix},
	\end{eqnarray}
	where $\lambda E_{\epsilon} - A_{\epsilon}$ and $\lambda E_{\eta} - A_{\eta}$ have block diagonal structure; each block takes the form $\lambda E_{{\epsilon}_i} - A_{{\epsilon}_i} = \lambda \begin{bmatrix} I_{{\epsilon}_i} & 0_{{\epsilon}_i \times 1} \end{bmatrix} - \begin{bmatrix} 0_{{\epsilon}_i \times 1} & I_{{\epsilon}_i} \end{bmatrix} $ and $\lambda E_{{\eta}_i} - A_{{\eta}_i} = \lambda \begin{bmatrix} I_{{\eta}_i} \\ 0_{1 \times {\eta}_i} \end{bmatrix} - \begin{bmatrix} 0_{1 \times {\eta}_i} \\ I_{{\eta}_i} \end{bmatrix} $, respectively; both $J_f$ and $J_{\sigma}$ are in Jordan canonical form; $J_{\sigma}$ has zeros on its diagonal and thus is a nilpotent matrix; $J_f$ contains, on its diagonal, all finite eigenvalues of $(\lambda \mathscr{E} - \mathscr{A})$.
\end{lemma}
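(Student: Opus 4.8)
The plan is to establish the normal form up to strict equivalence, that is, to construct the nonsingular $P\in\C^{m\times m}$ and $Q\in\C^{n\times n}$, by induction on $m+n$: first peel off the singular (rectangular) blocks, and then reduce the remaining regular core to Weierstrass form.

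First I would extract the column minimal indices. If the pencil is \emph{column-singular}, i.e.\ there is a nonzero polynomial vector $p(\lambda)\in\C[\lambda]^n$ with $(\lambda\mathscr{E}-\mathscr{A})p(\lambda)=0$, I would pick such a $p=\sum_{k=0}^{\epsilon}p_k\lambda^k$ of minimal degree $\epsilon$. Matching powers of $\lambda$ gives the chain $\mathscr{A}p_0=0$, $\mathscr{E}p_{k-1}=\mathscr{A}p_k$ for $1\le k\le\epsilon$, and $\mathscr{E}p_\epsilon=0$. A standard consequence of the minimality of $\epsilon$ is that $p_0,\dots,p_\epsilon$ are linearly independent in $\C^n$ and $\mathscr{E}p_0,\dots,\mathscr{E}p_{\epsilon-1}$ are linearly independent in $\C^m$; completing these to bases supplies $Q$ and $P$ for which $P(\lambda\mathscr{E}-\mathscr{A})Q$ exhibits a leading block equal to $\lambda E_\epsilon-A_\epsilon$ of size $\epsilon\times(\epsilon+1)$, decoupled from a strictly smaller pencil $\lambda\mathscr{E}'-\mathscr{A}'$. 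Iterating exhausts all column minimal indices; transposing the pencil and repeating the argument extracts the row minimal indices and produces the blocks $\lambda E_\eta-A_\eta$.

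After the singular blocks are removed, the remaining core is square with $\det(\lambda\mathscr{E}_0-\mathscr{A}_0)\not\equiv0$, hence regular, and I would bring it to Weierstrass form. Since the determinant is a nonzero polynomial, I can choose $c\in\C$ with $c\mathscr{E}_0-\mathscr{A}_0$ invertible; setting $\widehat{\mathscr{E}}:=(c\mathscr{E}_0-\mathscr{A}_0)^{-1}\mathscr{E}_0$ and splitting the underlying space into the generalized eigenspace of $\widehat{\mathscr{E}}$ at the eigenvalue $0$ and a complementary invariant subspace yields a strict equivalence taking the core to $\blkdiag(\lambda I-J_f,\ \lambda J_\sigma-I_\sigma)$. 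Here the invertible part of $\widehat{\mathscr{E}}$ accounts for the finite spectrum through $\lambda I-J_f$, while its nilpotent part accounts for the infinite spectrum through $\lambda J_\sigma-I_\sigma$; a final similarity places $J_f$ and $J_\sigma$ in Jordan form, with $J_\sigma$ nilpotent. Collecting the blocks from all stages gives \eqref{kron}.

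I expect the singular-block extraction to be the main obstacle: one must verify the independence claim underlying the construction of $P$ and $Q$ and, more delicately, that the leading $\epsilon\times(\epsilon+1)$ block genuinely \emph{decouples}, so that the induction applies to a smaller pencil whose minimal indices are no smaller than $\epsilon$. An alternative route, better aligned with the tools used later in this paper, is to carry out the same separation through the Wong sequences defined by $V_{i+1}=\mathscr{A}^{-1}(\mathscr{E}V_i)$ and $W_{i+1}=\mathscr{E}^{-1}(\mathscr{A}W_i)$, whose limiting subspaces isolate the four block types directly and replace the degree bookkeeping by subspace intersections and sums.
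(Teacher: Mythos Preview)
The paper does not prove this lemma: it is stated as a preliminary result with the citation \cite{gantmacher1959theory} and used without argument. So there is no ``paper's own proof'' against which to compare your proposal.

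That said, your sketch is essentially the classical Gantmacher argument and is sound in outline. The two points you flag as obstacles are exactly the technical heart of that proof: the linear independence of $p_0,\dots,p_\epsilon$ and of $\mathscr{E}p_0,\dots,\mathscr{E}p_{\epsilon-1}$ follows from the minimality of $\epsilon$ (a dependence would yield a lower-degree kernel polynomial), and the decoupling requires showing that after the change of basis the coupling block above the residual pencil can be eliminated by a further column operation without disturbing the leading $\epsilon\times(\epsilon+1)$ block --- again a consequence of $\epsilon$ being the \emph{smallest} column minimal index. Your Weierstrass step via the shift $c$ and the spectral splitting of $(c\mathscr{E}_0-\mathscr{A}_0)^{-1}\mathscr{E}_0$ is the standard one.

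Your closing remark about the Wong-sequence route is well taken and in fact matches the paper's own preferences: Remark~\ref{rem3} explicitly points to the quasi-Kronecker form of \cite{berger2012quasi,berger2013addition}, which carries out precisely that subspace-based separation and is the version the authors recommend for computation. If you wanted your write-up to align with the paper's toolkit, that would be the natural choice; but since the lemma is only quoted, either approach is acceptable.
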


\begin{remarks}\label{rem1}
The blocks in \eqref{kron} appear only in pairs. For example, if $E_{\epsilon}$ vanishes, then $A_{\epsilon}$ also vanishes. Moreover, $\epsilon-$blocks with $\epsilon_i = 0$ and/or $\eta-$blocks with $\eta_i = 0$ are possible, which results in zero columns (for $\epsilon_i = 0$) and/or zero rows (for $\eta_i = 0$) in the KCF \eqref{kron}. The KCF structure \eqref{kron} is unique up to the reordering of the diagonal blocks. The KCF \eqref{kron} without $\epsilon-$ and $\eta-$blocks is also called the Weierstrass canonical form (WCF). In case of a regular matrix pencil $(\lambda \mathscr{E}-\mathscr{A})$, the KCF \eqref{kron} reduces to the WCF.
\end{remarks}

\begin{remarks}\label{rem3}
	In this paper, we use the KCF \eqref{kron} to simplify the proof of some theoretical results. But, the determination of the KCF is not recommended because the computation is numerically ill-posed~\cite{van1979computation}. Furthermore, since $J_f$ and $J_{\sigma}$ are in Jordan canonical form, in general, the matrices $P$ and $Q$ in \eqref{kron} are complex-valued matrices. This is computationally undesirable, because if the system matrices are real-valued, one would like to get real $P$ and $Q$. To remove such difficulties in the computation of the KCF, based on the Wong sequences, a numerically stable quasi-Kronecker decomposition, which also reveals the KCF structure, can be found in \cite{berger2012quasi,berger2013addition}.
\end{remarks}

The solution theory of descriptor systems is a simple application of the KCF because it has a block diagonal structure and the associated variables can be considered separately. Setting
\begin{equation}\label{S2}
x = Q\begin{bmatrix}
x_{\epsilon}^\top & x_f^\top & x_{\sigma}^\top & x_{\eta}^\top
\end{bmatrix}^\top,
\end{equation}
then in terms of the four different blocks in the KCF, \eqref{pre:sys} is transformed into
\begin{subequations}\label{IV.1}
	\begin{eqnarray}
	E_{\epsilon}\dot{x}_{\epsilon} &=& A_{\epsilon} x_{\epsilon} \label{IV.1a}, \\
	\dot{x}_f &=& J_fx_f \label{IV.1b},\\
	J_{\sigma}\dot{x}_{\sigma} &=& x_{\sigma} \label{IV.1c}, \\
	E_{\eta} \dot{x}_{\eta} &=& A_{\eta}x_{\eta} \label{IV.1d}.
	\end{eqnarray}
\end{subequations}
The following solution analysis of \eqref{pre:sys}, via \eqref{kron}, is now straightforward.
\begin{enumerate}
	\item[S$1$)] Systems of the form \eqref{IV.1a}
	can be written as
	\begin{equation}\label{S4}
	\begin{bmatrix} I_{\epsilon} & 0 \end{bmatrix} \begin{bmatrix} \dot x_1\\ \dot x_2\end{bmatrix} = \begin{bmatrix} A_1 & A_2 \end{bmatrix} \begin{bmatrix} x_1\\ x_2\end{bmatrix},
	\end{equation}
where $A_1$ is a nilpotent matrix. Thus any solution $x_{\epsilon} = \begin{bmatrix} x_1^\top & x_2^\top \end{bmatrix}^\top$ to \eqref{S4} is given by
\begin{equation}\label{S1}
  \begin{bmatrix} x_1(t) \\ x_2(t) \end{bmatrix} = \begin{bmatrix} e^{A_1 t} x_1^0 + \int_0^t e^{A_1(t-\tau)} A_2 x_2(\tau) {\rm d}\tau \\ x_2(t) \end{bmatrix} = \begin{bmatrix}
  e^{A_1t} x_1^0 + \sum_{i = 0}^{h_1-1} A_1^i A_2 \int_0^t \frac{(t-\tau)^i}{i!} x_2(\tau) {\rm d}\tau \\ x_2(t) \end{bmatrix},\ t\ge 0,
\end{equation}	
for some $x_1^0$ of appropriate dimension,	where $h_1$ is the nilpotency index of $A_1$ and $x_2$ is arbitrary. Hence, in general, $x_{\epsilon}$ satisfying \eqref{IV.1a} is always impulsive, cf. \cite[p.~$26$]{berger2017observability}. Moreover, by \cite[Cor.~$2.4$]{trenn2013solution} any solution $x$ of \eqref{pre:sys} is uniquely determined if, and only if, the $\epsilon-$blocks in \eqref{kron} are not present.
\item[S$2$)] Corresponding to any initial condition, the solution of the free homogeneous state space system \eqref{IV.1b} exhibits no impulses, see \cite[Thm.~$3.3$]{trenn2009distributional}, i.e., $x_f[t] = 0$ for all $t\ge 0$.
\item[S$3$)] According to \cite{dai1989singular} the solution of \eqref{IV.1c} is given by
\begin{equation}\label{S3}
	x_{\sigma}|_{[0,\infty)} = -\sum_{i = 1}^{h_2-1} \delta^{(i-1)}J_{\sigma}^i x_{\sigma}(0-),
\end{equation}
where $h_2$ is the nilpotency index of the matrix $J_{\sigma}$. Therefore, the solution of \eqref{IV.1c} is impulsive (i.e., $x_{\sigma}[0]\neq 0$) if, and only if, $x_{\sigma}(0-) \notin \ker J_{\sigma}$.
\item[S$4$)] Each block in \eqref{IV.1d} can be written as
\begin{eqnarray*}
\dot{x}_{\eta_i} &=& J_{\eta_i}^\top x_{\eta_i}, \\ 0 &=& e_{\eta_i}^\top x_{\eta_i},
\end{eqnarray*}
where $J_{\eta_i}^\top$ is a nilpotent matrix having nilpotency index $\eta_i$ and $e_{\eta_i}$ is the last column of $I_{\eta_i}$. The only solution for this block is $x_{\eta} = 0$ and, in particular, $x_{\eta}[t] = 0$ for all $t\ge 0$, cf.\ also~\cite[p.~$25$]{berger2017observability}. Consequently, there are no impulses in the solutions of \eqref{pre:sys} due to $\eta-$blocks.
\end{enumerate}

\begin{remarks}
From above solution analysis, it is clear that the semistate $x$ in~\eqref{dls1} may have impulses only due to $\epsilon$- and $\sigma$-blocks in the KCF of $(\lambda E - A)$.
\end{remarks}

The concept of partial impulse observability of \eqref{dls1} is a natural extension of impulse observability (I-observability) of a system \eqref{dls1a}-\eqref{dls1b}: $(E,A,C)$ is impulse observable if, and only if, $(E,A,C)$ is partially impulse observable with respect to $L=I_n$. Alternative definitions for impulse observability are given in~\cite{hou1999causal,ishihara2001impulse} for instance, see also the survey \cite{berger2017observability} for more details. To check the I-observability of system \eqref{dls1a}-\eqref{dls1b}, the following algebraic criterion has been provided in the literature~\cite{berger2017observability,hou1999causal,ishihara2001impulse}:
\begin{equation}\label{Iobsv}
\rank \begin{bmatrix} E & A \\ 0 & E \\ 0 & C \end{bmatrix} = n + \rank E.
\end{equation}

\begin{remarks}\label{rem5}
Clearly, I-observability of \eqref{dls1a}-\eqref{dls1b} implies partial impulse observability of \eqref{dls1} with respect to any matrix $L$. But the opposite implication is not true in general. In Remark \ref{Iobsvw} below, we show that when $L=I_n$, the criteria  for partial impulse observability of \eqref{dls1} developed in the following sections trivially reduce to I-observability of \eqref{dls1a}-\eqref{dls1b}.
\end{remarks}

Now, we present some results from basic linear algebra, which play an important role in the further discussion.
The following fundamental result can be found in any standard textbook on linear algebra.

\begin{lemma}\label{lm:ker}
	Let $X$ and $Y$ be any two matrices of compatible dimensions. Then $\rank \begin{bmatrix} X \\ Y \end{bmatrix} = \rank X$ if, and only if, $\ker X \subseteq \ker Y$.
\end{lemma}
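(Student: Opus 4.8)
The plan is to reduce the rank equality to a statement about the dimension of a kernel via the rank--nullity theorem, and then translate that dimension equality into the claimed subspace inclusion. Throughout, ``compatible dimensions'' for the vertical stack $\begin{bmatrix} X \\ Y \end{bmatrix}$ means that $X$ and $Y$ share the same number of columns, say $n$.

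First I would record the elementary observation that a vector $v$ lies in $\ker \begin{bmatrix} X \\ Y \end{bmatrix}$ precisely when $Xv = 0$ and $Yv = 0$ both hold, so that $\ker \begin{bmatrix} X \\ Y \end{bmatrix} = \ker X \cap \ker Y$. The rank--nullity theorem then yields $\rank X = n - \dim \ker X$ and $\rank \begin{bmatrix} X \\ Y \end{bmatrix} = n - \dim(\ker X \cap \ker Y)$. Combining these two identities shows that the rank equality $\rank \begin{bmatrix} X \\ Y \end{bmatrix} = \rank X$ holds if, and only if, $\dim(\ker X \cap \ker Y) = \dim \ker X$.

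To finish, I would use that $\ker X \cap \ker Y$ is always a subspace of $\ker X$; hence equality of their dimensions is equivalent to equality of the subspaces themselves, $\ker X \cap \ker Y = \ker X$, and this in turn is equivalent to $\ker X \subseteq \ker Y$. Reading this chain of equivalences from end to end settles both implications at once.

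Since this is a standard fact, I do not expect any genuine obstacle; the only point requiring a little care is the appeal to finite dimensionality, so that an equality of nested subspaces can be detected purely through their dimensions, which is exactly what the rank--nullity theorem supplies here.
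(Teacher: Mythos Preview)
Your proof is correct and complete. The paper itself does not supply a proof of this lemma at all; it simply states the result and remarks that it ``can be found in any standard textbook on linear algebra.'' So there is nothing to compare against beyond noting that your rank--nullity argument is exactly the kind of standard verification the paper is implicitly deferring to.
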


\begin{lemma}\cite{matsaglia1974equalities}\label{lm:rank}
	Let $X ,~W$, and $Y$ be any matrices of compatible dimensions. If $X$ has full row rank and/or $Y$ has full column rank, then
	$$\rank \begin{bmatrix}
	X & W \\ 0 & Y
	\end{bmatrix} = \rank{X} + \rank{Y}.$$
\end{lemma}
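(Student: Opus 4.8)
The plan is to reduce the block matrix to block-diagonal form by invertible row or column operations, which leave the rank unchanged, and then invoke the elementary fact that the rank of a block-diagonal matrix is the sum of the ranks of its diagonal blocks. The two hypotheses ($X$ of full row rank, $Y$ of full column rank) are handled symmetrically, so it suffices to treat each in turn; note the ``and/or'' means that satisfying either one already yields the conclusion.

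First I would treat the case where $X \in \R^{m_1 \times n_1}$ has full row rank, with $W \in \R^{m_1 \times n_2}$ and $Y \in \R^{m_2 \times n_2}$. Full row rank of $X$ means $\im X = \R^{m_1}$, so every column of $W$ lies in $\im X$; hence there exists $Z \in \R^{n_1 \times n_2}$ with $W = XZ$. Post-multiplying by the invertible matrix $\begin{bmatrix} I & -Z \\ 0 & I \end{bmatrix}$ then clears the off-diagonal block,
$$\begin{bmatrix} X & W \\ 0 & Y \end{bmatrix} \begin{bmatrix} I & -Z \\ 0 & I \end{bmatrix} = \begin{bmatrix} X & 0 \\ 0 & Y \end{bmatrix},$$
and since multiplication by an invertible matrix preserves rank, the claim reduces to evaluating the rank of the block-diagonal matrix.

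The case where $Y$ has full column rank is dual. Full column rank of $Y$ means its row space is all of $\R^{n_2}$, so each row of $W$ lies in the row space of $Y$; hence $W = ZY$ for some $Z \in \R^{m_1 \times m_2}$. I would then pre-multiply by the invertible matrix $\begin{bmatrix} I & -Z \\ 0 & I \end{bmatrix}$ to obtain the same block-diagonal matrix $\begin{bmatrix} X & 0 \\ 0 & Y \end{bmatrix}$, again without changing the rank.

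For the concluding step, $\rank \begin{bmatrix} X & 0 \\ 0 & Y \end{bmatrix} = \rank X + \rank Y$ because the column space of this matrix is the direct sum of the column spaces of the two blocks, which lie in complementary coordinate subspaces. I expect the only genuinely substantive point to be the factorization of $W$: the existence of $Z$ with $W = XZ$ (respectively $W = ZY$) is precisely what the full-rank hypothesis provides, and it is worth spelling out that full row rank of $X$ forces $\im W \subseteq \im X$ (respectively that full column rank of $Y$ forces the corresponding row-space containment). Everything else is routine invariance of rank under multiplication by invertible matrices.
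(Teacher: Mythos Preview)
Your argument is correct and complete. The paper itself does not prove this lemma; it is simply stated with a citation to \cite{matsaglia1974equalities} and used as a tool in the subsequent rank computations, so there is no in-paper proof to compare against. Your reduction to block-diagonal form via the factorization $W = XZ$ (when $X$ has full row rank) or $W = ZY$ (when $Y$ has full column rank), followed by an invertible column or row operation, is the standard elementary proof and is exactly what the cited reference contains in spirit.
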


Finally, we recall the concept of Wong sequences corresponding to \eqref{dls1a} from~\cite{berger2012quasi}; for our purposes we only need the second Wong sequence.

\begin{definition}\label{def:wong}
For matrices $E,A\in\R^{m\times n}$ the Wong sequence $\{\mathcal{W}_{[E,A]}^i\}_{i = 0}^{\infty}$ is a sequence of subspaces, defined by
\[
	\mathcal{W}_{[E,A]}^0 := \{0\}, \quad	\mathcal{W}_{[E,A]}^{i+1} := A^{-1}(E\mathcal{W}_{[E,A]}^i),\ i\in\N.
\]
The union $\mathcal{W}_{[E,A]}^* := \bigcup_{i \in \mathbb{N}} \mathcal{W}_{[E,A]}^i$ is called the limit of the Wong sequence.
\end{definition}

\noindent We conclude this section by recalling the following result for I-observability of system \eqref{dls1a}-\eqref{dls1b} in terms of the Wong sequences.

\begin{lemma}\cite{berger2017observability}\label{lm4}
	The triple $(E,A,C)$ is I-observable if, and only if,
	\begin{equation*}
	\mathcal{W}^*_{[\bar{E},\bar{A}]} \cap \bar{A}^{-1}(\im{\bar{E}}) = \{0\},
	\end{equation*}
where $\bar{E} = \begin{bmatrix} E \\ 0 \end{bmatrix}$ and $\bar{A} = \begin{bmatrix} A \\ C \end{bmatrix}$.
\end{lemma}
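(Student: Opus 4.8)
The plan is to sidestep the distributional definition and argue entirely from the algebraic I-observability criterion \eqref{Iobsv}, which I take as given. In two steps I will (i) turn the rank equality \eqref{Iobsv} into the subspace identity $\ker\bar E\cap\bar A^{-1}(\im\bar E)=\{0\}$ by a kernel-dimension count, and (ii) show this identity is equivalent to the Wong-sequence condition by proving that the hypothesis collapses the Wong limit onto its first term. Throughout I use the second Wong recursion in the form $\mathcal{W}^{i+1}_{[\bar E,\bar A]}=\bar E^{-1}(\bar A\,\mathcal{W}^i_{[\bar E,\bar A]})$, so that $\mathcal{W}^1_{[\bar E,\bar A]}=\bar E^{-1}(\{0\})=\ker\bar E$.

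For step (i), put $N:=\begin{bmatrix} E & A\\ 0 & E\\ 0 & C\end{bmatrix}$ and read off its kernel directly: a vector $(u^\top,v^\top)^\top$ lies in $\ker N$ exactly when $Ev=0$, $Cv=0$, and $Eu=-Av$. Now $Ev=0$ means $v\in\ker\bar E$, while the conditions $Cv=0$ together with $Av\in\im E$ (the latter being precisely the solvability of $Eu=-Av$ in $u$) amount to $\bar A v\in\im\bar E$, i.e.\ $v\in\bar A^{-1}(\im\bar E)$. Hence the admissible $v$ form the subspace $\mathcal K:=\ker\bar E\cap\bar A^{-1}(\im\bar E)$, and for each admissible $v$ the corresponding $u$ range over a coset of $\ker E$. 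Counting dimensions along the projection $(u,v)\mapsto v$ gives $\dim\ker N=\dim\mathcal K+(n-\rank E)$, so that $\rank N=n+\rank E$ holds if, and only if, $\mathcal K=\{0\}$. By \eqref{Iobsv}, I-observability of $(E,A,C)$ is thus equivalent to $\ker\bar E\cap\bar A^{-1}(\im\bar E)=\{0\}$.

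For step (ii) I must show $\mathcal K=\{0\}\iff\mathcal{W}^*_{[\bar E,\bar A]}\cap\bar A^{-1}(\im\bar E)=\{0\}$. The implication from right to left is immediate, since $\ker\bar E=\mathcal{W}^1_{[\bar E,\bar A]}\subseteq\mathcal{W}^*_{[\bar E,\bar A]}$ forces $\mathcal K\subseteq\mathcal{W}^*_{[\bar E,\bar A]}\cap\bar A^{-1}(\im\bar E)$. For the converse I assume $\mathcal K=\{0\}$ and show the sequence is stationary from its first term: if $x\in\mathcal{W}^2_{[\bar E,\bar A]}=\bar E^{-1}(\bar A\,\ker\bar E)$, then $\bar E x=\bar A w$ for some $w\in\ker\bar E$, whence $\bar A w=\bar E x\in\im\bar E$ gives $w\in\mathcal K=\{0\}$; therefore $\bar E x=0$ and $x\in\ker\bar E=\mathcal{W}^1_{[\bar E,\bar A]}$. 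Thus $\mathcal{W}^2_{[\bar E,\bar A]}=\mathcal{W}^1_{[\bar E,\bar A]}$, and since the Wong sequence is non-decreasing it is stationary, so $\mathcal{W}^*_{[\bar E,\bar A]}=\ker\bar E$. Consequently the two intersections coincide and the equivalence follows.

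The dimension bookkeeping in step (i) is routine. The one genuinely informative point is the converse in step (ii): it says that I-observability is exactly the condition under which no new direction is added to the Wong sequence beyond $\ker\bar E$, so that the whole limit $\mathcal{W}^*_{[\bar E,\bar A]}$ collapses back to $\ker\bar E$. Spotting that the hypothesis $\mathcal K=\{0\}$ is what produces this collapse — rather than pushing the rank identities around — is where I expect the only real difficulty to be.
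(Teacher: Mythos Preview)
The paper does not prove Lemma~\ref{lm4}; it is quoted from \cite{berger2017observability} without argument, so there is no in-paper proof to compare against. The relevant question is therefore just whether your argument stands, and it does.

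Step~(i) is a clean fibering: the kernel of $N=\begin{bmatrix} E & A\\ 0 & E\\ 0 & C\end{bmatrix}$ projects onto $\mathcal K=\ker\bar E\cap\bar A^{-1}(\im\bar E)$ with fibres affine over $\ker E$, giving $\rank N=n+\rank E-\dim\mathcal K$, so \eqref{Iobsv} is exactly $\mathcal K=\{0\}$. Step~(ii) is also correct; the key point --- that $\mathcal K=\{0\}$ forces $\mathcal W^2_{[\bar E,\bar A]}=\mathcal W^1_{[\bar E,\bar A]}$ and hence collapses the whole limit to $\ker\bar E$ --- is valid and is the heart of the matter. One caveat worth noting: you explicitly use the recursion $\mathcal{W}^{i+1}_{[\bar E,\bar A]}=\bar E^{-1}(\bar A\,\mathcal{W}^i_{[\bar E,\bar A]})$, which is the intended one (it is what the paper itself invokes in the proof of Theorem~\ref{thm2}, where $\mathcal W^1_{[\bar E,\bar A]}=\ker\bar E$), even though Definition~\ref{def:wong} as printed has the roles of $E$ and $A$ swapped. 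Your convention agrees with the standard second Wong sequence of \cite{berger2012quasi,berger2017observability} and with the paper's actual usage, so your proof is consistent.
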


\section{Counterexample}\label{count}

In this section, an example is given to show that the algebraic condition \eqref{Kimpulse} is not equivalent to the partial impulse observability of \eqref{dls1}, proving Theorem \ref{Darouachtheorem} wrong.

Consider system \eqref{dls1} with matrices:
$$E = \begin{bmatrix} 0 & 1 & 0  \\ 0 & 0 & 1  \\ 0 & 0 & 0  \end{bmatrix},~
A = \begin{bmatrix} 1 & 0 & 0  \\ 0 & 1 & 0  \\ 0 & 0 & 1  \end{bmatrix},~
C = \begin{bmatrix} 0 & 0 & 1 \end{bmatrix}, \text{ and } L = \begin{bmatrix} 0 & 1 & 0 \end{bmatrix}.$$
Here it is straightforward to see that the condition \eqref{Kimpulse} is satisfied.  Since this system is regular and of the form~\eqref{IV.1c} we obtain from~\eqref{S3} that
\begin{eqnarray}\label{soln}
x|_{[0,\infty)} = - \delta Ex(0-) - \dot \delta E^2x(0-)
\end{eqnarray}
for any $(x,y,z)\in\sB$. Since $CE =CE^2 =  0 $, \eqref{soln} gives that $y[t] = C x[t] = 0$ for all $t\ge 0$, but $z[0] = -\delta \begin{bmatrix} 0 & 0 & 1 \end{bmatrix} x(0-)$, which may be nonzero for some $x(0-)$. Therefore, the system is not partially impulse observable with respect to $L$. This proves that, in general, Theorem~\ref{Darouachtheorem} is not correct.

\section{Main result}\label{mainresult}

The main aim of this section is to derive a simple rank criteria for partial impulse observability of \eqref{dls1} in terms of the original system matrices. In Theorem \ref{thm1} below, we first derive this condition on the basis of the KCF of the matrix pencil $(\lambda E - A)$. To prove Theorem \ref{thm1}, without loss of generality, we assume that the pencil $(\lambda E - A)$ in system \eqref{dls1} is in KCF. Moreover, we use the notation
\begin{equation}\label{CQ}
CQ = \begin{bmatrix} C_{\epsilon} & C_f & C_{\sigma} & C_{\eta} \end{bmatrix}
\end{equation}
and
\begin{equation}\label{LQ}
LQ = \begin{bmatrix} L_{\epsilon} & L_f & L_{\sigma} & L_{\eta} \end{bmatrix},
 \end{equation}
where the sizes of the block matrices on the right hand side of \eqref{CQ} and \eqref{LQ} are compatible with the sizes of the blocks in the KCF of $(\lambda E - A)$. Furthermore,
we assume that $C_{\epsilon}$ and $L_{\epsilon}$ are partitioned as follows, corresponding to the decomposition $x_{\epsilon} = \begin{bmatrix}
x_1^\top & x_2^\top \end{bmatrix}^\top$ as in \eqref{S4}:
\begin{equation}\label{c1}
C_{\epsilon} = \begin{bmatrix} C_1 & C_2 \end{bmatrix}\quad \text{ and }\quad
L_{\epsilon} = \begin{bmatrix} L_1 & L_2 \end{bmatrix}.
\end{equation}

\begin{theorem}\label{thm1}
Consider system \eqref{dls1}.  Then $(E,A,C)$ is partially impulse observable with respect to $L$ if, and only if,
\begin{equation}\label{eq:cond}
 \ker \begin{bmatrix}
	C_2 & C_1A_2 & C_1A_1A_2 & \hdots  &
	C_1A_1^{l-1}A_2 & -C_{\sigma}J_{\sigma} \\
	& C_2 & C_1A_2 &  \hdots &
	C_1A_1^{l-2}A_2 & -C_{\sigma}J_{\sigma}^2 \\
	& & \ddots & \ddots  & \vdots & \vdots \\
	 & & & C_2 & C_1A_2 & -C_{\sigma}J_{\sigma}^{l}\\
	 & & & & C_2 & 0
   \end{bmatrix} \subseteq \ker \begin{bmatrix}
	L_2 & L_1A_2 & L_1A_1A_2 & \hdots &
	L_1A_1^{l-1}A_2 & -L_{\sigma}J_{\sigma} \\
	& L_2 & C_1A_2 &  \hdots &
	L_1A_1^{l-2}A_2 & -L_{\sigma}J_{\sigma}^2 \\
	& & \ddots & \ddots & \vdots & \vdots \\
	 & & & L_2 & L_1A_2 & -L_{\sigma}J_{\sigma}^{l}\\
	 & & & & L_2 & 0 \end{bmatrix}
\end{equation}
for all $l \geq n+1$.
\end{theorem}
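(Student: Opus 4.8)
The plan is to reduce the problem, via the KCF, to a statement purely about the impulsive parts of the semistate, and then translate the impulse-freeness conditions on $y$ and $z$ into kernel-containment conditions on the block matrices in \eqref{eq:cond}.

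First I would use the preliminary solution analysis (S1)--(S4). Since we have assumed $(\lambda E - A)$ is in KCF, the semistate decomposes as in \eqref{S2}, and from (S2), (S4) the $x_f$- and $x_\eta$-parts contribute no impulses, so $y[t]$ and $z[t]$ depend only on the impulsive parts of $x_\epsilon$ and $x_\sigma$. Partial impulse observability says: for every solution, if $C_\epsilon x_\epsilon[t] + C_\sigma x_\sigma[t] = 0$ for all $t\ge 0$, then $L_\epsilon x_\epsilon[t] + L_\sigma x_\sigma[t] = 0$ for all $t\ge 0$. So the key step is to compute the impulsive parts $x_\epsilon[t]$ and $x_\sigma[t]$ explicitly. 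For the $\sigma$-block, \eqref{S3} gives the impulses directly in terms of $J_\sigma^i x_\sigma(0-)$, supported only at $t=0$. For the $\epsilon$-block, from \eqref{S1} I would extract the impulsive part: writing $x_\epsilon = (x_1^\top, x_2^\top)^\top$ with $x_2$ an arbitrary piecewise-smooth distribution, the first component $x_1$ is obtained by convolving the nilpotent semigroup $e^{A_1 t}$ (equivalently the finite sum $\sum_i A_1^i A_2 \ast$ derivatives of $x_2$) against $x_2$, which manufactures impulses whenever $x_2$ itself carries impulses. The terms $C_1 A_1^i A_2$ and $C_2$ appearing in \eqref{eq:cond} are exactly the coefficients that couple the impulses in $x_2$ (and its derivatives) into $y$.

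The central computation is to assemble these coefficients into a matrix acting on the finite-dimensional vector of ''impulsive data.'' I would collect the impulsive part of $x_2$ and its distributional derivatives at a single time instant, together with the vector $x_\sigma(0-)$ (via the $J_\sigma^i$ powers), into one long data vector. Feeding this through the output maps $C$ and $L$ produces precisely the two block-Toeplitz-plus-$J_\sigma$ matrices displayed in \eqref{eq:cond}: the staircase structure with $C_2$ on the diagonal and $C_1 A_1^{j}A_2$ off-diagonal reflects how an $i$-th order impulse in $x_2$ propagates through $i$ integrations against the nilpotent $A_1$, and the last column $-C_\sigma J_\sigma^i$ records the $\sigma$-contribution. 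The statement ''$y[t]=0$ for all $t$ forces $z[t]=0$ for all $t$'' is then equivalent to ''every impulsive-data vector annihilated by the $C$-matrix is also annihilated by the $L$-matrix,'' i.e.\ the kernel containment in \eqref{eq:cond}. Here I would invoke Lemma~\ref{lm:ker} to phrase the equivalence cleanly, and the truncation at $l \ge n+1$ is justified because the nilpotency indices $h_1$ (of $A_1$) and $h_2$ (of $J_\sigma$) are bounded by $n$, so no impulse of order exceeding $n$ can occur and the staircase stabilizes.

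The main obstacle I anticipate is the bookkeeping that shows the impulsive part of $x_1$, as given by the convolution in \eqref{S1}, produces exactly the claimed coefficients $C_1 A_1^{j} A_2$ in the correct staircase positions. Concretely, one must verify that if $x_2$ has impulsive part $\sum_{k} c_k \delta^{(k)}$ at some $t_0$, then the induced impulses in $x_1$ are $\sum_{k}\sum_{j} A_1^{j} A_2 c_{k} \delta^{(k-j-1)}$ (with the nilpotency of $A_1$ truncating the $j$-sum), and that pairing these against $C_\epsilon = [C_1\ \ C_2]$ reproduces the rows of \eqref{eq:cond}. This requires care with the index shifts coming from integration (which lowers distributional-derivative order by one) and with the fact that impulses can a priori occur at arbitrary times $t_0 \ge 0$, so one must argue the condition at a single representative instant suffices; this last point follows because the coefficient matrices are time-independent, so the kernel-containment is the same at every $t_0$.
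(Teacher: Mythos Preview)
Your plan is essentially the paper's proof: reduce via the KCF, discard the impulse-free $f$- and $\eta$-blocks, write $x_\sigma[0]$ from \eqref{S3} and extract the impulsive part of $x_1$ from the convolution \eqref{S1} applied to an impulsive $x_2$, then match the $\delta^{(k)}$-coefficients to obtain the block-Toeplitz matrices in \eqref{eq:cond}. The index-shift bookkeeping you flag is exactly what the paper does with the identity \eqref{D1}, and your observation that the $\sigma$-impulses live only at $t=0$ while the $\epsilon$-impulses can sit at any $t_k\ge 0$ (with the same coefficient matrix) is how the paper handles the ``every $t$'' quantifier.

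One point needs correction. Your justification for the range $l\ge n+1$---that ``no impulse of order exceeding $n$ can occur''---is wrong for the $\epsilon$-block: $x_2$ is a \emph{free} piecewise-smooth distribution and may carry Dirac terms $\delta^{(j)}$ of arbitrarily high order $j$. The nilpotency of $A_1$ and $J_\sigma$ only kills the far-off-diagonal entries $C_1A_1^{j}A_2$ and $C_\sigma J_\sigma^{i}$; it does not bound the number of block-rows. This is precisely why the condition must be stated for \emph{all} $l\ge n+1$: in the sufficiency direction, a given solution has some finite impulse order $l_k$ at each $t_k$, and you need \eqref{eq:cond} at that $l_k$ (padding with zeros to reach at least $n+1$). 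In the necessity direction you must, for each $l\ge n+1$, construct a solution whose $x_2$ carries impulses up to order $l$ and whose $x_\sigma(0-)$ is the prescribed $v$. Once you fix this, your argument matches the paper's.
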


\begin{proof}
	$(\Rightarrow)$: Let $l \geq n+1$ and
	\begin{equation*}
	\begin{bmatrix}
	v_{0} \\ v_{1}  \\ v_{2} \\ \vdots  \\ v_{l} \\ v \end{bmatrix}  \in \ker \begin{bmatrix}
	C_2 & C_1A_2 & C_1A_1A_2 & \hdots &
	C_1A_1^{l-1}A_2 & -C_{\sigma}J_{\sigma} \\
	& C_2 & C_1A_2 &  \hdots  &
	C_1A_1^{l-2}A_2 & -C_{\sigma}J_{\sigma}^2 \\
	& & \ddots & \ddots &  \vdots & \vdots \\
	& & & C_2 & C_1A_2 & -C_{\sigma}J_{\sigma}^{l} \\ & & & & C_2 & 0
	\end{bmatrix}.
	\end{equation*}
Define $x_{\sigma}(t) = v$ for $t<0$, $x_\sigma|_{[0,\infty)}$ as in \eqref{S3} and
$x_2 = \sum_{j=0}^l \delta^{(j)} v_j$. Then, with $x_1$ as in \eqref{S1} for $x_1^0 = 0$, $x_{\epsilon} = \begin{bmatrix} x_1^\top & x_2^\top \end{bmatrix}^\top$, $x_f = 0$, $x_{\eta} = 0$, $y = C_1x_1+C_2x_2+C_{\sigma}x_{\sigma}$, and $z = L_1x_1+L_2x_2+L_{\sigma}x_{\sigma}$ we have that $(x,y,z)\in \mathscr{B}$.
Then by using the convolution property
\begin{eqnarray}\label{D1}
 \int_0^t \frac{(t-\tau)^i}{i!} \delta^{(j)}_{s} {\rm d}\tau &=& \begin{cases} \frac{(t-s)^{i-j}}{(i-j)!}, & j=0,\ldots,i,\\[3mm]
\delta^{(j-i-1)}_{s}, & j = i+1,\ldots,l,\end{cases}
\end{eqnarray}
for any $s\ge 0$, the equation \eqref{S1} implies
\begin{equation}\label{xsol}
x_1(t) = \sum_{i=0}^{h_1-1} A_1^i A_2 \left\{ \sum_{j=0}^{i} \frac{t^{i-j}}{(i-j)!} v_{j} + \sum_{j=i+1}^{l} \delta^{(j-i-1)} v_{j} \right\},
\end{equation}
where $h_1$ and $h_2$ are the nilpotency indices of $A_1$ and $J_{\sigma}$, respectively.
Thus, by \eqref{xsol} and \eqref{S3}, we obtain
\begin{subequations}\label{x}
	\begin{eqnarray}
	   x_1[0] &=& \sum_{i=0}^{h_1-1} \sum_{j=i+1}^{l} \delta^{(j-i-1)} A_1^i A_2 v_{j}, \label{x10} \\
		x_{\sigma}[0] &=& -\sum_{i=0}^{h_2-1} J_\sigma^{i+1} \delta^{(i)}v \label{xsigma}
	\end{eqnarray}
\end{subequations}
and clearly $x_1[t]=0$ and $x_{\sigma}[t]=0$ for all $t>0$.
Since, by choices of $v$ and $v_i~(0 \leq i \leq l)$,
	\begin{eqnarray*}
	y[0] &=& \sum_{i=0}^{h_1-1} \sum_{j=i+1}^{l} \delta^{(j-i-1)} C_1A_1^i A_2 v_{j} + \sum_{i=0}^{l} \delta^{(i)} C_2v_{i} - \sum_{i=0}^{l} \delta^{(i)} C_{\sigma}J_\sigma^{i+1}v \\
   &=& \begin{bmatrix}
	\delta I & \delta^{(1)} I & \hdots &  \delta^{(l)} I
	\end{bmatrix} \begin{bmatrix}
	C_2 & C_1A_2 & C_1A_1A_2 & \hdots &
	C_1A_1^{l-1}A_2 & -C_{\sigma}J_{\sigma} \\
	& C_2 & C_1A_2 &  \hdots &
	C_1A_1^{l-2}A_2 & -C_{\sigma}J_{\sigma}^2 \\
	& & \ddots & \ddots  & \vdots & \vdots \\
	& & & C_2 & C_1A_2 & -C_{\sigma}J_{\sigma}^{l} \\ & & & & C_2 & 0
	\end{bmatrix}\begin{bmatrix}
	v_{0} \\ v_{1}  \\ v_{2} \\ \vdots  \\ v_{l} \\ v
	\end{bmatrix} = 0,
	\end{eqnarray*}
partial impulse observability of the system implies $z[0] = 0$. Thus
\begin{eqnarray}
	0 = z[0] &=& \sum_{i=0}^{h_1-1} \sum_{j=i+1}^{l} \delta^{(j-i-1)} L_1A_1^i A_2 v_{j} + \sum_{i=0}^{l} \delta^{(i)} L_2v_{i} - \sum_{i=0}^{l} \delta^{(i)} L_{\sigma}J_\sigma^{i+1}v \nonumber \\
	&=& \begin{bmatrix}
	\delta I & \delta^{(1)} I & \hdots &  \delta^{(l)} I
	\end{bmatrix} \begin{bmatrix}
	L_2 & L_1A_2 & L_1A_1A_2 & \hdots &
	L_1A_1^{l-1}A_2 & -L_{\sigma}J_{\sigma} \\
	& L_2 & L_1A_2 &  \hdots &
	L_1A_1^{l-2}A_2 & -L_{\sigma}J_{\sigma}^2 \\
	& & \ddots & \ddots & \vdots & \vdots \\
	& & & L_2 & L_1A_2 & -L_{\sigma}J_{\sigma}^{l} \\ & & & & L_2 & 0
	\end{bmatrix}\begin{bmatrix}
	v_{0} \\ v_{1}  \\ v_{2} \\ \vdots  \\ v_{l} \\ v \end{bmatrix} . \label{z}
	\end{eqnarray}
This means that
	\begin{equation*}
	\begin{bmatrix}
	v_{0} \\ v_{1}  \\ v_{2} \\ \vdots  \\ v_{l} \\ v \end{bmatrix}  \in \ker \begin{bmatrix}
	L_2 & L_1A_2 & L_1A_1A_2 & \hdots &
	L_1A_1^{l-1}A_2 & -L_{\sigma}J_{\sigma} \\
	& L_2 & L_1A_2 & \hdots &
	L_1A_1^{l-2}A_2 & -L_{\sigma}J_{\sigma}^2 \\
	& & \ddots & \ddots & \vdots & \vdots \\
	& & & L_2 & L_1A_2 & -L_{\sigma}J_{\sigma}^{l} \\ & & & & L_2 & 0
	\end{bmatrix}.
	\end{equation*}
	
\noindent $(\Leftarrow)$: Let $(x,y,z)\in\mathscr{B}$ be such that $y[t] = 0$ for all $t \geq 0$ and partition
$x =  \begin{bmatrix}
x_1^\top & x_2^\top & x_f^\top & x_{\sigma}^\top & x_{\eta}^\top
\end{bmatrix}^\top$ as in \eqref{S2} and \eqref{S4}.
By definition of $\mathscr{D}'_{pw\mathscr{C}^{\infty}}$ there is a locally finite set $(t_k)_{k\in\mathbb{Z}}\subseteq \mathbb{R}$ such that $x_2[t_k] \neq 0$ and $x_2[t] = 0$ for all $t\neq t_k$, see~\cite{trenn2009distributional}. Let $n_1$ and $n_2$ be the number of components in $x_2$ and $x_{\sigma}$, respectively. Then by \cite[Prop.~2.1.12]{trenn2009distributional} there exist $l_k\in \mathbb{N}$ and $v_{k,j}\in\mathbb{R}^{n_1}$ for $k \in \mathbb{Z}$ and $j=0,\ldots,l_k$ such that
 $$x_2[t_k] =  \sum_{j=0}^{l_k} v_{k,j} \delta^{(j)}_{t_k}.$$
Fix $k\in\mathbb{Z}$. Without loss of generality we may assume that $l_k \geq n+1$, otherwise we may add additional terms with $v_{k,j}=0$. Then, by \eqref{S1}, \eqref{S3}, and \eqref{D1} with $s=t_k$, we find that
\begin{eqnarray*}
x_1[t_k] &=& \sum_{i=0}^{h_1-1} \sum_{j=i+1}^{l_k} \delta^{(j-i-1)}_{t_k} A_1^i A_2 v_{k,j}, \\
x_2[t_k] &=& \sum_{i=0}^{l_k} \delta^{(i)}_{t_k} v_{k,i},  \\
x_{\sigma}[0] &=& -\sum_{i=0}^{h_2-1}  \delta^{(i)} J_\sigma^{i+1} x_\sigma^0,
\end{eqnarray*}
where $x_\sigma^0 \in \mathbb{R}^{n_2}$.
Thus from $y[t_k]=0$ it follows that
\begin{eqnarray*}
	\begin{bmatrix}
	v_{k,0} \\ v_{k,1} \\ v_{k,2} \\ \vdots \\ v_{k,l_k} \\ x_\sigma^0 \end{bmatrix}  \in \ker \begin{bmatrix}
	C_2 & C_1A_2 & C_1A_1A_2 & \hdots &
	C_1A_1^{l_k-1}A_2 & -C_{\sigma}J_{\sigma} \\
	& C_2 & C_1A_2 &  \hdots &
	C_1A_1^{l_k-2}A_2 & -C_{\sigma}J_{\sigma}^2 \\
	& & \ddots & \ddots & \vdots & \vdots \\
	& & & C_2 & C_1A_2 & -C_{\sigma}J_{\sigma}^{l_k} \\ & & & & C_2 &0
	 \end{bmatrix}.
	\end{eqnarray*}
Then assumption \eqref{eq:cond} implies that
\begin{eqnarray*}
	\begin{bmatrix}
	v_{k,0} \\ v_{k,1}  \\ v_{k,2} \\ \vdots  \\ v_{k,l_k} \\ x_\sigma^0 \end{bmatrix}  \in \ker \begin{bmatrix}
	L_2 & L_1A_2 & L_1A_1A_2 & \hdots &
	L_1A_1^{l_k-1}A_2 & -L_{\sigma}J_{\sigma} \\
	& L_2 & L_1A_2 &  \hdots &
	L_1A_1^{l_k-2}A_2 & -L_{\sigma}J_{\sigma}^2 \\
	& & \ddots & \ddots & \vdots & \vdots \\
	& & & L_2 & L_1A_2 & -L_{\sigma}J_{\sigma}^{l_k} \\ & & & & L_2 & 0
	\end{bmatrix}
\end{eqnarray*}
which, by a similar calculation as in \eqref{z}, means $z[t_k]= 0$. Since $k$ was arbitrary and $z[t]=0$ for $t\neq t_k$ is obvious, this proves partial impulse observability of \eqref{dls1} with respect to $L$. $\hfill$ $\Box$
\end{proof}

Before investigating the algebraic criteria for partial impulse observability of \eqref{dls1} with respect to $L$, we define
\begin{eqnarray*}
&\bar{E} = \begin{bmatrix} E \\ 0 \end{bmatrix},~\bar{A} = \begin{bmatrix} A \\ C \end{bmatrix},~\bar{E}_1 = \begin{bmatrix} \bar{E} \\ 0 \end{bmatrix},~\bar{A}_1 = \begin{bmatrix} \bar{A} \\ L \end{bmatrix},& \\
& \mathcal{F}_l :=  \setcounter{MaxMatrixCols}{15}
\NiceMatrixOptions
{nullify-dots,code-for-last-col = \color{black},code-for-last-col=\color{black}}
\begin{bNiceMatrix}[first-row,last-col]
& \Ldots[line-style={solid,<->},shorten=0pt]^{l \text{ block columns}} \\
\bar{E} & \bar{A} &  &  &  & \\
& \bar{E} & \bar{A} &  & & \\
&  & \ddots & \ddots &   & \Vdots[line-style={solid,<->}]^{l \text{ block rows}} \\
&  &  & \bar{E} & \bar{A} & \\
&  &  &  & \bar{E} &
\end{bNiceMatrix} ~~,\quad
\mathcal{F}_{l,L} :=  \setcounter{MaxMatrixCols}{15}
\NiceMatrixOptions
{nullify-dots,code-for-last-col = \color{black},code-for-last-col=\color{black}}
\begin{bNiceMatrix}[first-row,last-col]
& \Ldots[line-style={solid,<->},shorten=0pt]^{l \text{ block columns}} \\
\bar{E}_1 & \bar{A}_1 &  &  &  & \\
& \bar{E}_1 & \bar{A}_1 &  & & \\
&  & \ddots & \ddots &   & \Vdots[line-style={solid,<->}]^{l \text{ block rows}}\\
&  &  & \bar{E}_1 & \bar{A}_1 & \\
&  &  &  & \bar{E}_1 &
\end{bNiceMatrix}~~,&
\end{eqnarray*}
and introduce the following rank condition
\begin{equation}\label{eq:kimpulse}
\forall\,l \geq n+1:\ \rank \mathcal{F}_{l} = \rank \mathcal{F}_{l,L}.
\end{equation}

\noindent We now transform the condition \eqref{eq:kimpulse} in terms of the KCF blocks. If $l = 2$, then

\begin{eqnarray*}
	\rank \mathcal{F}_2 = \rank \begin{bmatrix}
		\bar{E} & \bar{A} \\ & \bar{E} \end{bmatrix} = \rank \begin{bmatrix}
		E & A \\ & C \\ & E \end{bmatrix} =
	\rank \begin{bmatrix}
		E_{\epsilon} & & & & A_{\epsilon} \\ & I_f & & & & J_f & & \\ & & J_{\sigma} & & & & I_{\sigma} & \\ & & & E_{\eta} & & & & A_{\eta} \\ & & & & C_{\epsilon} & C_f & C_{\sigma} & C_{\eta} \\ & & & & E_{\epsilon} \\ & & & & & I_f & & \\ & & & & & & J_{\sigma} & \\ & & & & & & & E_{\eta} \end{bmatrix}.
\end{eqnarray*}

\noindent Since $E_{\epsilon}$ has full row rank, $E_{\eta}$ has full column rank, and $I_f$ has full rank, it is a direct consequence of Lemma~\ref{lm:rank} (applied twice) that
\begin{eqnarray*}
\rank \mathcal{F}_2 = \rank E_{\epsilon} +
   2 \rank I_f + 2 \rank E_{\eta} + \rank \begin{bmatrix} J_{\sigma} & & I_{\sigma} \\ & C_{\epsilon} & C_{\sigma} \\ & E_{\epsilon} & \\ & & J_{\sigma} \end{bmatrix} .
\end{eqnarray*}

\noindent Now, by using column operations corresponding to the multiplication of the last matrix in the above identity with $\begin{bmatrix}
-I_{\sigma} \\ & I \\ J_{\sigma} & & I_{\sigma} \end{bmatrix}$ from the right, we obtain
\begin{eqnarray*}
	\rank \mathcal{F}_2	&=& \rank E_{\epsilon} + 2 \rank I_f + 2 \rank E_{\eta} + \rank \begin{bmatrix}  & & I_{\sigma} \\ C_{\sigma}J_{\sigma} & C_{\epsilon} & C_{\sigma} \\ & E_{\epsilon} & \\ J_{\sigma}^2 & & J_{\sigma} \end{bmatrix} \\
	&=& \rank E_{\epsilon} + 2 \rank I_f + 2 \rank E_{\eta} + \rank I_{\sigma} + \rank \begin{bmatrix} C_{\sigma}J_{\sigma} & C_{\epsilon} \\ & E_{\epsilon} \\ J_{\sigma}^2 &  \end{bmatrix}.
\end{eqnarray*}

\noindent Now by substituting $E_{\epsilon} = \begin{bmatrix} I_{\epsilon} & 0 \end{bmatrix},~
C_{\epsilon} = \begin{bmatrix} C_1 & C_2 \end{bmatrix}$ and again using Lemma \ref{lm:rank} due to full rank of $I_{\epsilon}$, we obtain
\begin{eqnarray}
%&= &  \rank E_{\epsilon} + 2 \rank I_f + 2 \rank E_{\eta} + \rank I_{\sigma} + \rank \begin{bmatrix} C_{\sigma}J_{\sigma} & C_1 & C_2 \\ & I_{\epsilon} & 0 \\ J_{\sigma}^2 &  \end{bmatrix} \nonumber \\
\rank \mathcal{F}_2	&= & 2 \rank E_{\epsilon} + 2 \rank I_f + 2 \rank E_{\eta} + \rank I_{\sigma} + \rank \begin{bmatrix} C_{\sigma}J_{\sigma} & C_2 \\ J_{\sigma}^2 & \end{bmatrix}.\label{r1}
\end{eqnarray}
Similarly, it is easy to show that
\begin{eqnarray}
\rank \mathcal{F}_{2,L} &=& \rank \begin{bmatrix}
		\bar{E}_1 & \bar{A}_1 \\ & \bar{E}_1 \end{bmatrix} = \rank \begin{bmatrix}
	E & A \\ & C \\  & L \\ & E \end{bmatrix} \nonumber \\
	&=&  2 \rank E_{\epsilon} + 2 \rank I_f + 2 \rank E_{\eta} + \rank I_{\sigma} + \rank \begin{bmatrix} C_{\sigma}J_{\sigma} & C_2 \\ L_{\sigma}J_{\sigma} & L_2 \\ J_{\sigma}^2 \end{bmatrix}.\label{r2}
\end{eqnarray}

\noindent Thus, in view of Lemma \ref{lm:ker}, \eqref{r1} and \eqref{r2} provide that $\rank \mathcal{F}_2 = \rank \mathcal{F}_{2,L}$ if, and only if,
\begin{equation}\label{R1}
\ker \begin{bmatrix} C_{\sigma}J_{\sigma} & C_2 \\ J_{\sigma}^2 & \end{bmatrix} \subseteq \ker \begin{bmatrix} L_{\sigma}J_{\sigma} & L_2  \end{bmatrix}.
\end{equation}

\noindent Similarly, $\rank \mathcal{F}_3 = \rank \mathcal{F}_{3,L}$ if, and only if,
\begin{equation}\label{R2}
\ker \begin{bmatrix} C_{\sigma}J_{\sigma} & C_2 & C_1A_2 \\ C_{\sigma}J_{\sigma}^2 & & C_2 \\J_{\sigma}^3 & & \end{bmatrix} \subseteq \ker \begin{bmatrix} L_{\sigma}J_{\sigma} & L_2 & L_1A_2 \\ L_{\sigma}J_{\sigma}^2 & & L_2  \end{bmatrix}.
\end{equation}

\noindent Finally, we expound the calculation for $l= 4$ and denote
\begin{equation*}
\bar{U} = \begin{bmatrix}
I_{\sigma} \\ & I \\ -J_{\sigma} & & I_{\sigma} \\ & & & I \\ J_{\sigma}^2 & & -J_{\sigma} & & I_{\sigma} \\ & & & & & I \\ -J_{\sigma}^3 & & J_{\sigma}^2 & & -J_{\sigma} & & I_{\sigma}
\end{bmatrix}\quad \text{ and }\quad \hat{U} = \begin{bmatrix}
I & -C_1 & & C_1A_1 & & -C_1A_1^2 & \\
& I_{\epsilon} & & -A_1 & & A_1^2 & \\
&& I & -C_1 & & C_1A_1 & & \\ &&& I_{\epsilon} & & -A_1 & \\ &&&& I & -C_1 & \\ &&&&& I_{\epsilon} \\ &&&&&&I \end{bmatrix}.
\end{equation*}
We first use the KCF and then apply Lemma \ref{lm:rank} for full rank matrices $E_{\epsilon},~E_{\eta}$, and $I_f$ to produce that
\begin{eqnarray*}
	\rank \mathcal{F}_4 &=& \rank \begin{bmatrix}
\bar{E} & \bar{A} \\ & \bar{E} & \bar{A} \\ && \bar{E} & \bar{A} \\ &&& \bar{E}   \end{bmatrix} = \rank \begin{bmatrix} E & A \\ & C \\ & E & A \\ & & C \\& & E & A \\ &&& C \\ &&& E \end{bmatrix} \\
 &=& \rank E_{\epsilon} + 4 \rank I_f + 4 \rank E_{\eta} + \rank \begin{bmatrix} J_{\sigma} & & I_{\sigma} \\ & C_{\epsilon} & C_{\sigma} \\ & E_{\epsilon} & & A_{\epsilon} \\ & & J_{\sigma} & & I_{\sigma} \\ & & & C_{\epsilon} & C_{\sigma} \\ & & & E_{\epsilon} & & A_{\epsilon} \\ & & & & J_{\sigma} & & I_{\sigma} \\ & & & & & C_{\epsilon} & C_{\sigma} \\ & & & & & E_{\epsilon} & \\ & & & & & & J_{\sigma} \end{bmatrix}.
\end{eqnarray*}
Now, by multiplying the last matrix in the equality above with $\bar{U}$ from the right and again using Lemma~\ref{lm:rank} for $I_{\sigma}$, we obtain
\begin{eqnarray*}
\rank \mathcal{F}_4 &=& \rank E_{\epsilon} + 4 \rank    I_f + 4 \rank E_{\eta} + 3 \rank I_{\sigma} + \rank \begin{bmatrix} C_{\sigma}J_{\sigma} & C_{\epsilon} &  \\ & E_{\epsilon} & A_{\epsilon} \\ C_{\sigma}J_{\sigma}^2 & & C_{\epsilon} &  \\ & & E_{\epsilon} &  A_{\epsilon} \\ C_{\sigma}J_{\sigma}^3 & & & C_{\epsilon}   \\& & & E_{\epsilon}  \\J_{\sigma}^4 \end{bmatrix} \\
 &=& \rank E_{\epsilon} + 4 \rank I_f + 4 \rank E_{\eta} + 3 \rank I_{\sigma} + \rank \begin{bmatrix} C_{\sigma}J_{\sigma} & C_1 & C_2 &  \\ & I_{\epsilon} & 0 & A_1 & A_2 \\ C_{\sigma}J_{\sigma}^2 & & & C_1 & C_2   \\ & & & I_{\epsilon} & 0 & A_1 & A_2 \\ C_{\sigma}J_{\sigma}^3 & & & & & C_1 & C_2   \\&&& & & I_{\epsilon} & 0 \\J_{\sigma}^4 \end{bmatrix} .
\end{eqnarray*}
Therefore, by multiplying the last matrix in the equality above with $\hat{U}$ from the left and applying Lemma~\ref{lm:rank} we may infer that
\begin{eqnarray}\label{r3}
	\rank \mathcal{F}_4 = 4 \rank E_{\epsilon} + 4 \rank I_f + 4 \rank E_{\eta} + 3 \rank I_{\sigma} + \rank \begin{bmatrix} C_{\sigma}J_{\sigma} & C_2 & C_1A_2 & C_1A_1A_2 \\ C_{\sigma}J_{\sigma}^2 & & C_2 & C_1A_2 \\ C_{\sigma}J_{\sigma}^3 & & & C_2 \\ J_{\sigma}^4 & &  \end{bmatrix} .
\end{eqnarray}
By a similar calculation as above we find that
\begin{eqnarray}\label{r4}
\rank \mathcal{F}_{4,L} = 4 \rank E_{\epsilon} + 4 \rank I_f + 4 \rank E_{\eta} + 3 \rank I_{\sigma} + \rank \begin{bmatrix} C_{\sigma}J_{\sigma} & C_2 & C_1A_2 & C_1A_1A_2 \\ C_{\sigma}J_{\sigma}^2 & & C_2 & C_1A_2 \\ C_{\sigma}J_{\sigma}^3 & & & C_2 \\ L_{\sigma}J_{\sigma} & L_2 & L_1A_2 & L_1A_1A_2 \\ C_{\sigma}J_{\sigma}^2 & & L_2 & L_1A_2 \\ L_{\sigma}J_{\sigma}^3 & & & L_2 \\ J_{\sigma}^4 & & \end{bmatrix} .
\end{eqnarray}

\noindent Therefore, in view of Lemma \ref{lm:ker}, \eqref{r3} and \eqref{r4} provide that
 $\rank \mathcal{F}_4 = \rank \mathcal{F}_{4,L}$ if, and only if,
\begin{equation}\label{R5}
\ker \begin{bmatrix} C_{\sigma}J_{\sigma} & C_2 & C_1A_2 & C_1A_1A_2 \\ C_{\sigma}J_{\sigma}^2 & & C_2 & C_1A_2 \\ C_{\sigma}J_{\sigma}^3 & & & C_2 \\ J_{\sigma}^4 & &  \end{bmatrix} \subseteq \ker \begin{bmatrix} L_{\sigma}J_{\sigma} & L_2 & L_1A_2 & L_1A_1A_2 \\ C_{\sigma}J_{\sigma}^2 & & L_2 & L_1A_2 \\ L_{\sigma}J_{\sigma}^3 & & & L_2 \end{bmatrix}.
\end{equation}

\noindent Then by repeating the procedure that produces \eqref{R1}, \eqref{R2}, and \eqref{R5}, it is easy to prove that for any integer $l \geq 3$, $\rank \mathcal{F}_{l} = \rank \mathcal{F}_{l,L}$ if, and only if,
\begin{equation}\label{R3}
\ker \begin{bmatrix}
	C_{\sigma}J_{\sigma} & C_2 & C_1A_2 & C_1A_1A_2 & \hdots &  C_1A_1^{l-3}A_2  \\ C_{\sigma}J_{\sigma}^2 & & C_2 & C_1A_2 &  \hdots & C_1A_1^{l-4}A_2  \\
	\vdots & &  & \ddots & \ddots &  \vdots  \\
	C_{\sigma}J_{\sigma}^{l-2} & & & & C_2 & C_1A_2 \\
	C_{\sigma}J_{\sigma}^{l-1} & & & & & C_2 \\
	J_{\sigma}^{l} & & & & &  \end{bmatrix} \subseteq \ker \begin{bmatrix}
	L_{\sigma}J_{\sigma} & L_2 & L_1A_2 & L_1A_1A_2 & \hdots &  L_1A_1^{l-3}A_2  \\ L_{\sigma}J_{\sigma}^2 & & L_2 & L_1A_2 &  \hdots & L_1A_1^{l-4}A_2  \\
	\vdots & &  & \ddots & \ddots &  \vdots  \\
	L_{\sigma}J_{\sigma}^{l-2} & & & & L_2 & L_1A_2 \\
	L_{\sigma}J_{\sigma}^{l-1} & & & & & L_2
   \end{bmatrix}.
\end{equation}

With these findings we are now ready to state the main result of this paper.
	
\begin{theorem}\label{thm2}
For a given system \eqref{dls1}, the following statements are equivalent:
\begin{enumerate}[(a)]
	\item $(E,A,C)$ is partially impulse observable with respect to $L$. \label{a}
	\item The condition \eqref{eq:kimpulse} holds. \label{b}
	\item $\rank \mathcal{F}_{n+1} = \rank \mathcal{F}_{n+1,L}$, \label{c}
	\item $\mathcal{W}^*_{[\bar{E},\bar{A}]} \cap \bar{A}^{-1}(\im{\bar{E}}) \subseteq \ker L$. \label{d}		
\end{enumerate}
\end{theorem}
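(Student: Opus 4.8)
My plan is to establish the four equivalences by proving (a)$\,\Leftrightarrow\,$(b), the trivial specialization (b)$\,\Rightarrow\,$(c), and then closing the loop through (c)$\,\Rightarrow\,$(d)$\,\Rightarrow\,$(b) by means of the Wong sequence. The equivalence (a)$\,\Leftrightarrow\,$(b) requires almost no new work: Theorem~\ref{thm1} already characterizes partial impulse observability by the kernel inclusion \eqref{eq:cond} for all $l\ge n+1$, while the KCF computations carried out just before the present statement show that the rank identity \eqref{eq:kimpulse} is, for each $l$, equivalent to the kernel inclusion \eqref{R3}. It then only remains to observe that \eqref{eq:cond} and \eqref{R3} are one and the same inclusion: the two coefficient matrices differ only by a permutation of their block columns (the position of the $J_\sigma$-column) together with a sign on that column, and these invertible operations are applied simultaneously on the $C$- and $L$-sides, hence leave the inclusion unchanged. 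This yields (a)$\,\Leftrightarrow\,$(b), and (b)$\,\Rightarrow\,$(c) is the instance $l=n+1$.

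For the remaining implications I would abandon the KCF and argue directly with $\mathcal{F}_l$ and $\mathcal{F}_{l,L}$. Because $\bar E_1=\bigl[\begin{smallmatrix}\bar E\\0\end{smallmatrix}\bigr]$ and $\bar A_1=\bigl[\begin{smallmatrix}\bar A\\L\end{smallmatrix}\bigr]$, the rows of $\mathcal{F}_l$ occur among the rows of $\mathcal{F}_{l,L}$, so $\ker\mathcal{F}_{l,L}\subseteq\ker\mathcal{F}_l$ and, by Lemma~\ref{lm:ker}, $\rank\mathcal{F}_l=\rank\mathcal{F}_{l,L}$ holds if and only if $\ker\mathcal{F}_l\subseteq\ker\mathcal{F}_{l,L}$. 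Reading off the block rows, a tuple $(\xi_1,\dots,\xi_l)$ lies in $\ker\mathcal{F}_l$ precisely when $\bar E\xi_i+\bar A\xi_{i+1}=0$ for $i=1,\dots,l-1$ and $\bar E\xi_l=0$, while the extra rows of $\mathcal{F}_{l,L}$ contribute exactly the constraints $L\xi_2=\dots=L\xi_l=0$ (the final block row adds no $L$-constraint). Hence $\rank\mathcal{F}_l=\rank\mathcal{F}_{l,L}$ is equivalent to the assertion that every such chain satisfies $L\xi_j=0$ for $j\ge 2$.

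The decisive step is to convert this chain condition into the subspace inclusion (d). Starting from the terminal relation $\bar E\xi_l=0$ and using $\bar E\xi_i=-\bar A\xi_{i+1}$ repeatedly, an induction along the chain shows that the components $\xi_2,\dots,\xi_l$ collectively sweep out exactly $\mathcal{W}^{l-1}_{[\bar E,\bar A]}\cap\bar A^{-1}(\im\bar E)$ as the chain varies; here the factor $\bar A^{-1}(\im\bar E)$ records the solvability of $\bar E\xi_{j-1}=-\bar A\xi_j$ for a predecessor, and the intersected Wong subspace collects the admissible tails. Consequently $\rank\mathcal{F}_l=\rank\mathcal{F}_{l,L}$ is equivalent to $\mathcal{W}^{l-1}_{[\bar E,\bar A]}\cap\bar A^{-1}(\im\bar E)\subseteq\ker L$. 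Since $\{\mathcal{W}^{i}_{[\bar E,\bar A]}\}$ is, by Definition~\ref{def:wong}, a nondecreasing chain of subspaces of $\mathbb{R}^n$, it stabilizes after at most $n$ steps, so $\mathcal{W}^{l-1}_{[\bar E,\bar A]}=\mathcal{W}^*_{[\bar E,\bar A]}$ for every $l\ge n+1$. Thus the single instance $l=n+1$ in (c) already reads $\mathcal{W}^*_{[\bar E,\bar A]}\cap\bar A^{-1}(\im\bar E)\subseteq\ker L$, which is (d); and since the same limit subspace governs every $l\ge n+1$, (d) restores the rank equality for all such $l$, i.e.\ (b). Specializing to $L=I_n$ collapses the whole chain of equivalences to Lemma~\ref{lm4}, which serves both as a consistency check and as confirmation that the argument is the natural generalization of the $I$-observability criterion.

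The principal obstacle is the Wong-sequence dictionary of the third paragraph: one must pin down precisely which member of the Wong sequence each $\xi_j$ belongs to and verify that the components carrying the surviving $L$-constraints range over $\mathcal{W}^*_{[\bar E,\bar A]}\cap\bar A^{-1}(\im\bar E)$, neither more nor less. Getting the orientation of the sequence right (reading the chain upward from $\bar E\xi_l=0$) and justifying the stabilization bound $l=n+1$, so that the lone instance (c) already captures the full limit, are the delicate points, and this is exactly where adapting the proof of Lemma~\ref{lm4} is most helpful. A minor, bookkeeping obstacle is the verification in the first paragraph that \eqref{eq:cond} and \eqref{R3} coincide despite their differing block layouts, which is what lets (a)$\,\Leftrightarrow\,$(b) follow without reproving Theorem~\ref{thm1}.
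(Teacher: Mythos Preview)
Your overall strategy coincides with the paper's: (a)$\Leftrightarrow$(b) via Theorem~\ref{thm1} and the KCF rank reductions, (b)$\Rightarrow$(c) trivially, and then (c)$\Rightarrow$(d)$\Rightarrow$(b) by reading $\ker\mathcal{F}_l$ as a chain $\bar E\xi_i+\bar A\xi_{i+1}=0$, $\bar E\xi_l=0$ and matching its components against the Wong sequence $\mathcal{W}^i_{[\bar E,\bar A]}$. Your chain argument in the second and third paragraphs is correct and is exactly the paper's proof with the block index reversed; in particular your observation that $\xi_2$ alone already sweeps out $\mathcal{W}^{l-1}_{[\bar E,\bar A]}\cap\bar A^{-1}(\im\bar E)$ is what the paper uses for (c)$\Rightarrow$(d).

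There is, however, one concrete slip in your (a)$\Leftrightarrow$(b) step. You claim that \eqref{eq:cond} and \eqref{R3} are ``one and the same inclusion'' because the matrices differ only by a column permutation and a sign on the $J_\sigma$-column. That is false for the same value of $l$: the block matrices do not even have the same shape. At level $l$, each side of \eqref{eq:cond} has $l{+}1$ block rows and $l{+}2$ block columns, whereas \eqref{R3} has $l$ block rows on the left (including an extra bottom row $J_\sigma^{\,l}$ with no analogue in \eqref{eq:cond}) and only $l{-}1$ block rows on the right, both with $l$ block columns. The correct identification is between \eqref{eq:cond} at level $l$ and \eqref{R3} at level $l{+}2$, and even then you must invoke nilpotency: for $l\ge n{+}1$ one has $J_\sigma^{\,l+1}=J_\sigma^{\,l+2}=0$, so the surplus entries $C_\sigma J_\sigma^{\,l+1}$ and the surplus row $J_\sigma^{\,l+2}$ vanish, after which the column permutation and sign change you describe do align the two inclusions. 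With this shift and the nilpotency made explicit, your argument for (a)$\Leftrightarrow$(b) goes through; the paper is equally brief at this point, but the specific justification you wrote (pure permutation plus sign at the same $l$) is incorrect as stated.
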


\begin{proof}
The equivalence of (\ref{a}) and (\ref{b}) is a direct consequence of Theorem \ref{thm1} and the conditions~\eqref{R1} and~\eqref{R3}. The statement (\ref{b}) $\Rightarrow$ (\ref{c}) is obvious. Thus, in order to complete the proof, it is sufficient to show that (\ref{c}) $\Rightarrow$ (\ref{d}) and (\ref{d}) $\Rightarrow$ (\ref{b}). Before proving these statements, we observe, by a simple permutation of rows, that
\begin{equation*}
\mathcal{F}_{l,L} = P \begin{bmatrix} \mathcal{F}_l \\ \NiceMatrixOptions
{nullify-dots,code-for-last-col = \color{black},code-for-last-col=\color{black}}
\begin{bNiceMatrix}[first-row,last-col]
& \Ldots[line-style={solid,<->},shorten=0pt]^{l \text{ block columns}} \\ 0 & L &&\\ &&\ddots& \\ &&&L & ~\Vdots[line-style={solid,<->},shorten=0pt]^{(l-1) \text{ block rows}}~
\end{bNiceMatrix}\\ \phantom{o}
\end{bmatrix},
\end{equation*}
where $P$ is a suitable permutation matrix,
and hence $\rank \mathcal{F}_{l} = \rank \mathcal{F}_{l,L}$ holds if, and only if,
\begin{equation}\label{eq:ker-Fl-ker-L}
\ker \mathcal{F}_l \subseteq \ker  \begin{bmatrix} 0 & L &&\\ &&\ddots& \\ &&&L
\end{bmatrix} = \mathbb{R}^n \times \underset{(l-1) \text{ times}}{\underbrace{\ker L \times \ldots \times \ker L}}.
\end{equation}

(\ref{c}) $\Rightarrow$ (\ref{d}): Let $v_n \in \mathcal{W}^*_{[\bar{E},\bar{A}]} \cap \bar{A}^{-1}(\im{\bar{E}})$. Since the Wong sequences terminate after finitely many steps, and in each iteration before termination the dimension increases by at least one, it is clear that $\mathcal{W}^*_{[\bar{E},\bar{A}]} = \mathcal{W}^n_{[\bar{E},\bar{A}]}$. Hence there exists $v_{n-1} \in \mathcal{W}^{n-1}_{[\bar{E},\bar{A}]}$ such that $\bar E v_n = -\bar A v_{n-1}$. Successively, there exist $v_i \in \mathcal{W}^i_{[\bar{E},\bar{A}]}$ such that $\bar E v_{i+1} = -\bar A v_i$ for $i=n-2,\ldots,1$ and $\bar E v_1 = 0$, since $v_1 \in \mathcal{W}^1_{[\bar{E},\bar{A}]} = \ker \bar E$. Furthermore, since also $v_n \in \bar{A}^{-1}(\im{\bar{E}})$ there exists $v_{n+1}\in\mathbb{R}^n$ such that $\bar A v_n = - \bar E v_{n+1}$. Therefore, we find that $(v_{n+1}^\top, v_n^\top, \ldots, v_1^\top)^\top \in \ker \mathcal{F}_{n+1}$ and from~\eqref{eq:ker-Fl-ker-L} it follows that $v_n \in \ker L$.

(\ref{d}) $\Rightarrow$ (\ref{b}): In order to show (\ref{b}) we prove that \eqref{eq:ker-Fl-ker-L} holds for all $l\ge n+1$. Let $x = (x_l^\top,\ldots,x_1^\top)^\top\in\ker \mathcal{F}_l$. Then $\bar E x_l = -\bar A x_{l-1}, ~\hdots,~ \bar E x_2 = -\bar A x_1,~ \bar E x_1 = 0$ and hence we have
\begin{align*}
x_1 &\in \ker \bar E = \mathcal{W}^1_{[\bar{E},\bar{A}]},\\
x_2 &= \bar{E}^{-1}(-\bar A x_1) \in \bar{E}^{-1}(\bar A \mathcal{W}^1_{[\bar{E},\bar{A}]}) = \mathcal{W}^2_{[\bar{E},\bar{A}]},\\
&\ \, \vdots  \\
x_l &= \bar{E}^{-1}(-\bar A x_{l-1}) \in \bar{E}^{-1}(\bar A \mathcal{W}^{l-1}_{[\bar{E},\bar{A}]}) = \mathcal{W}^l_{[\bar{E},\bar{A}]}.
\end{align*}
Since $\mathcal{W}^i_{[\bar{E},\bar{A}]} \subseteq \mathcal{W}^*_{[\bar{E},\bar{A}]}$ for all $i\ge 1$ we have that $x_i \in \mathcal{W}^*_{[\bar{E},\bar{A}]}$ for all $i=1,\ldots,l$. Furthermore, since $\bar A x_{i} = - \bar E x_{i+1}$ for $i=1,\ldots,l-1$ we have that $x_i \in \bar{A}^{-1}(\im{\bar{E}})$, hence
\begin{equation*}
\forall\, i=1,\ldots,l-1:\ x_i \in \mathcal{W}^*_{[\bar{E},\bar{A}]} \cap \bar{A}^{-1}(\im{\bar{E}}) \subseteq \ker L,
\end{equation*}
which shows \eqref{eq:ker-Fl-ker-L}. This completes the proof. $\hfill \Box$
\end{proof}

\begin{remarks}\label{Iobsvw}
It is clear that if $L = I_n$, then the condition of statement (\ref{d}) in Theorem \ref{thm2} reduces to the criterion for I-observability from Lemma~\ref{lm4}.
\end{remarks}

\begin{remarks}\label{rem7}
The condition of statement (\ref{d}) in Theorem \ref{thm2} is straightforward to implement by using a one-line command, for instance, in MATLAB. If $s$ is the least positive integer such that $\mathcal{W}^{s+1}_{[\bar{E},\bar{A}]} = \mathcal{W}^s_{[\bar{E},\bar{A}]}$, then the number $(n+1)$ in statement (\ref{c}) of Theorem \ref{thm2} can be replaced by $s$. Here, we use $(n+1)$ blocks in $\mathcal{F}$ because the value of $s$ is not known in advance and our main aim is to provide a condition directly in terms of the known data, i.e., the system coefficient matrices and the dimension $n$. Notably, using $(n+1)$ blocks does not make the condition of statement (\ref{c}) in Theorem~\ref{thm2} less or more restrictive.
\end{remarks}

\section{Illustrative examples}\label{numerical}

\begin{example}\label{exp1}
Consider system \eqref{dls1} described by the coefficient matrices
$$E = \begin{bmatrix} 1 & 0 \end{bmatrix},~
A = \begin{bmatrix} 0 & 1 \end{bmatrix},~
C = \begin{bmatrix} 1 & 0 \end{bmatrix},~ \text{and }
L = \begin{bmatrix}  0 & 1 \end{bmatrix}.$$
Then $(E,A,C)$ is not partially impulse observable with respect to $L$, because choosing $x_1$ as the Heaviside step function and $x_2 = \delta$ we obtain a solution with $y[t]=0$ for all $t\ge 0$, but $z[0] = x_2[0] = \delta \neq 0$, thus $z$ exhibits impulses while $y$ is impulse free. On the other hand it is easy to verify that
	\begin{equation*}
	\rank \mathcal{F}_{3}  = 4 \neq 5 =  \rank \mathcal{F}_{3,L}.
	\end{equation*}
\end{example}

\begin{example}\label{exp2}
Consider system \eqref{dls1} with the same coefficient matrices as in Section \ref{count}. Then, as shown there, $(E,A,C)$ is not partially impulse observable with respect to $L$. It is easy to see that
	\begin{equation*}
	\rank \mathcal{F}_{4}  = 9 \neq 10 =  \rank \mathcal{F}_{4,L}.
	\end{equation*}
\end{example}

\begin{example}\label{exp3}
Consider system \eqref{dls1} described by the coefficient matrices
\begin{equation*}
E = \begin{bmatrix} 0 & 1 & 0  \\ 0 & 0 & 1  \\ 0 & 0 & 0  \end{bmatrix},~
A = \begin{bmatrix} 1 & 0 & 0  \\ 0 & 1 & 0  \\ 0 & 0 & 1  \end{bmatrix},~
C = \begin{bmatrix} 1 & 0 & 0 \end{bmatrix}, \text{ and } L = \begin{bmatrix} 0 & 1 & 0 \end{bmatrix}.
\end{equation*}
As compared to Example \ref{exp2}, here we show that a suitable change in the output matrix $C$ makes the system partially impulse observable with respect to the same matrix $L$. By using \eqref{soln}, it is clear that
\[
		x|_{[0,\infty)} = -\begin{bmatrix} x^0_2 \\ x^0_3 \\ 0 \end{bmatrix}  \delta - \begin{bmatrix} x^0_3 \\ 0 \\ 0 \end{bmatrix}\dot \delta,
\]
for suitable $x^0_2, x^0_3$, and hence
    \begin{align*}
       y[0] &= -x_2^0 \delta - x_3^0\dot \delta, \\
		z[0] &= - x_3^0\delta.
	\end{align*}
Clearly $y[0] =0$ implies $x^0_2 = x^0_3 = 0$ and hence also $z[0] = 0$. Thus $(E,A,C)$ is partially impulse observable with respect to $L$. We can also verify this fact by checking the rank condition
\begin{equation*}
	\rank \mathcal{F}_{4}  = 11 =  \rank \mathcal{F}_{4,L}.
\end{equation*}
This demonstrates the effectiveness of statement (\ref{c}) in Theorem \ref{thm2}.
\end{example}

\section{Conclusion}\label{conc}
This paper has established necessary and sufficient conditions for the partial impulse observability of linear descriptor systems. The developed conditions in terms of a rank criterion involving the original system coefficient matrices and the Wong sequences, respectively, are very easy to implement.

\section*{Declaration of competing interest}
The authors declare that they have no known competing financial interests or personal relationships that could have appeared to influence the work reported in this paper.

\section*{Acknowledgment}
The First author is thankful to Council of Scientific and Industrial Research, New Delhi, for the award of JRF through grant number $09/1023(0022)/2018$-EMR-I. The Science and Engineering Research Board, New Delhi, supported the third author under project MTR/$2019/000494$.


\begin{thebibliography}{10}
	\expandafter\ifx\csname url\endcsname\relax
	\def\url#1{\texttt{#1}}\fi
	\expandafter\ifx\csname urlprefix\endcsname\relax\def\urlprefix{URL }\fi
	\expandafter\ifx\csname href\endcsname\relax
	\def\href#1#2{#2} \def\path#1{#1}\fi
	
	\bibitem{darouach2012functional}
	M.~Darouach, On the functional observers for linear descriptor systems, Systems
	\& Control Letters 61~(3) (2012) 427--434.
	
	\bibitem{darouach2017functional}
	M.~Darouach, F.~Amato, M.~Alma, Functional observers design for descriptor
	systems via {LMI}: Continuous and discrete-time cases, Automatica 86 (2017)
	216--219.
	
	\bibitem{trenn2009distributional}
	S.~Trenn, Distributional differential algebraic equations, Univ.-Verlag, 2009.
	
	\bibitem{trenn2013solution}
	S.~Trenn, Solution concepts for linear{ DAEs}: A survey, in: Surveys in
	differential-algebraic equations I, Springer, 2013, pp. 137--172.
	
	\bibitem{berger2017observability}
	T.~Berger, T.~Reis, S.~Trenn, Observability of linear differential-algebraic
	systems: A survey, in: Surveys in differential-algebraic equations IV,
	Springer, 2017, pp. 161--219.
	
	\bibitem{gantmacher1959theory}
	F.~R. Gantmacher, The Theory of Matrices, vol.: 2, Chelsea Publishing Company,
	New York, 1959.
	
	\bibitem{van1979computation}
	P.~Van~Dooren, The computation of {K}ronecker's canonical form of a singular
	pencil, Linear Algebra and Its Applications 27 (1979) 103--140.
	
	\bibitem{berger2012quasi}
	T.~Berger, S.~Trenn, The quasi-{K}ronecker form for matrix pencils, SIAM
	Journal on Matrix Analysis and Applications 33~(2) (2012) 336--368.
	
	\bibitem{berger2013addition}
	T.~Berger, S.~Trenn, Addition to “the quasi-{K}ronecker form for matrix
	pencils'', SIAM Journal on Matrix Analysis and Applications 34~(1) (2013)
	94--101.
	
	\bibitem{dai1989singular}
	L.~Dai, Singular control systems, vol. 118, Springer, 1989.
	
	\bibitem{hou1999causal}
	M.~Hou, P.~M\"{u}ller, Causal observability of descriptor systems, IEEE
	Transactions on Automatic Control 44~(1) (1999) 158--163.
	
	\bibitem{ishihara2001impulse}
	J.~Y. Ishihara, M.~H. Terra, Impulse controllability and observability of
	rectangular descriptor systems, IEEE Transactions on Automatic Control 46~(6)
	(2001) 991--994.
	
	\bibitem{matsaglia1974equalities}
	G.~Matsaglia, G.~P.~H.~Styan, Equalities and inequalities for ranks of
	matrices, Linear and Multilinear Algebra 2~(3) (1974) 269--292.
	
	\bibitem{hou2004controllability}
	M.~Hou, Controllability and elimination of impulsive modes in descriptor
	systems, IEEE Transactions on Automatic Control 49~(10) (2004) 1723--1729.
	
\end{thebibliography}
\end{document}